\setlist[itemize]{noitemsep, topsep=0pt}
\setlist[enumerate]{noitemsep, topsep=0pt}
\tikzset{>=latex}
\newcommand{\defterm}[1]{\boldmath\textbf{#1}\unboldmath} 
\newcommand{\csfop}{\mathbf{X}}
\newcommand{\csf}[1]{\csfop_{#1}}
\newcommand{\gdpop}{\mathbf{G}}
\newcommand{\gdp}[1]{\gdpop_{#1}}			
\newcommand{\gdpco}{g}					
\newcommand{\hdpop}{\mathbf{H}}
\newcommand{\hdp}[1]{\hdpop_{#1}}			
\newcommand{\hdpco}{h}					
\newcommand{\stpop}{\mathbf{S}}
\newcommand{\stp}[1]{\stpop_{#1}}			
\newcommand{\stpco}{s}					
\newcommand{\uhdpop}{\mathbf{\bar H}}			
\newcommand{\uhdp}[1]{\uhdpop_{#1}}			
\newcommand{\uhdpsup}[2]{\uhdpop^{#1}_{#2}}	
\newcommand{\soupop}{\mathbf{Q}}
\newcommand{\soup}[1]{\soupop_{#1}}
\DeclareMathOperator{\Cat}{Cat} 
\DeclareMathOperator{\spine}{sp}
\DeclareMathOperator{\type}{type}
\newcommand{\SSS}{\mathcal{S}}
\newcommand{\Hone}{\eta} 
\newcommand{\Htwo}{\zeta} 
\declaretheorem[style=plain]{theorem}
\declaretheorem[style=plain,sibling=theorem]{lemma}
\declaretheorem[style=plain,sibling=theorem]{corollary}
\declaretheorem[style=plain,sibling=theorem]{proposition}
\declaretheorem[style=definition,qed=$\blacktriangleleft$,sibling=theorem]{example}
\declaretheorem[style=definition]{question}
\newcommand{\0}{\emptyset}
\newcommand{\isom}{\cong}
\newcommand{\partn}{\vdash}
\newcommand{\sm}{\setminus}
\newcommand{\x}{\times}
\newcommand{\Nn}{\mathbb{N}}
\newcommand{\Qq}{\mathbb{Q}}
\newcommand{\Zz}{\mathbb{Z}}
\def\moverlay{\mathpalette\mov@rlay}
\def\mov@rlay#1#2{\leavevmode\vtop{%
   \baselineskip\z@skip \lineskiplimit-\maxdimen
   \ialign{\hfil$\m@th#1##$\hfil\cr#2\crcr}}}
\newcommand{\charfusion}[3][\mathord]{
    #1{\ifx#1\mathop\vphantom{#2}\fi
        \mathpalette\mov@rlay{#2\cr#3}
      }
    \ifx#1\mathop\expandafter\displaylimits\fi}
\newcommand{\cupdot}{\charfusion[\mathbin]{\cup}{\cdot}}
\newcommand{\dju}{\cupdot}
\author[JAP]{Jos\'e Aliste-Prieto}
\address{Departamento de Matem\'aticas, Facultad de Ciencias Exactas, Universidad Andres Bello, Santiago, Chile}
\email{jose.aliste@unab.cl}
\author[JLM]{Jeremy L.\ Martin}
\address{Department of Mathematics, University of Kansas, Lawrence, KS, United States}
\email{jlmartin@ku.edu}
\author[JDW]{Jennifer D.\ Wagner}
\address{Department of Mathematics and Statistics, Washburn University, Topeka, KS, United States}
\email{jennifer.wagner1@washburn.edu}
\author[JZ]{Jos\'e Zamora}
\address{Departamento de Matem\'aticas, Facultad de Ciencias Exactas, Universidad Andres Bello, Santiago, Chile}
\email{josezamora@unab.cl}
\title{Chromatic symmetric functions and polynomial invariants of trees}
\thanks{The first and fourth authors acknowledge financial support from ANID/CONICYT FONDECYT Regular 1241663. The second and third authors thank their home institutions for granting sabbatical leave in Fall 2023, and Universidad Andres Bello in Santiago, Chile, for hosting them during that time, when this research was carried out.}
\date{\today}
\keywords{Chromatic symmetric function, generalized degree sequence, subtree polynomial, Crew's conjecture, Eisenstat-Gordon conjecture}
\subjclass[2020]{%
05C05, 
05C31, 
05C60, 
05E05} 
\begin{document}
\begin{abstract}
Stanley asked whether a tree is determined up to isomorphism by its chromatic symmetric function.
We approach Stanley's problem by studying the relationship between the chromatic symmetric function and other invariants.
First, we prove Crew's conjecture that the chromatic symmetric function of a tree determines its generalized degree sequence, which enumerates vertex subsets by cardinality and the numbers of internal and external edges.
Second, we prove that the restriction of the generalized degree sequence to subtrees contains exactly the same information as the subtree polynomial, which enumerates subtrees by cardinality and number of leaves.  
Third, we construct arbitrarily large families of trees sharing the same subtree polynomial, proving and generalizing a conjecture of Eisenstat and Gordon.
\end{abstract}
\maketitle

\section{Introduction}

The \defterm{chromatic symmetric function} $\csf{G}$ of a graph $G$ enumerates proper colorings of~$G$ by their distributions of colors.
Introduced by Stanley in~\cite{Stanley-CSF}, the chromatic symmetric function (henceforth CSF) is a far-reaching generalization of the classical chromatic polynomial introduced by Birkhoff~\cite{Birkhoff}.
It is closely related to other important invariants in algebraic combinatorics, including Noble and Welsh's \defterm{U-polynomial} \cite{NobleWelsh}, whose definition is motivated by knot theory, and Billera, Jia and Reiner's \defterm{quasisymmetric function of matroids} \cite{BJR}.  The CSF plays a key role in the theory of combinatorial Hopf algebras, arising as the canonical morphism from the chromatic Hopf algebra of graphs to quasisymmetric functions \cite[\S4.5]{ABS}.  It has natural analogues in noncommutative symmetric functions \cite{GebSag} and quasisymmetric functions \cite{ShW}, with applications including the cohomology of Hessenberg subvarieties of flag manifolds.  Variations of the CSF have been developed for directed graphs, rooted trees, etc.: see, e.g., \cite{aliste2017trees,Ellzey,AWvW,aval2022quasisymmetric,Pawlowski,LW}.

Stanley's original article posed the question of whether the CSF is a complete isomorphism invariant for trees, i.e., whether $\csf{T}=\csf{T'}$ implies $T\isom T'$.  The problem remains unsolved despite considerable attention.  There is neither an easy way to construct two nonisomorphic trees with the same CSF, nor to extract sufficient local information to reconstruct a tree from the global data encoded in the CSF.
The distinguishing power of the CSF remains mysterious, in sharp contrast to weaker invariants like the chromatic polynomial (which provides no information about a tree other than the number of vertices) or versions of the CSF for labeled graphs (e.g., the noncommutative CSF of labeled trees studied by Gebhard and Sagan \cite{GebSag} is easily seen to be a complete invariant).  The uniqueness problem is also well understood for non-tree graphs: Stanley's original paper gave two five-vertex graphs with the same CSF, and Orellana and Scott \cite{OS} constructed an infinite family of pairs of unicyclic graphs (i.e., graphs that can be made into trees by deleting one edge) with the same CSF.  Additional families of graphs with the same CSFs are constructed in \cite{APCSZ}.

Partial progress has been made on Stanley's question.
Heil and Ji \cite{HeilJi} have verified that the CSF is a complete invariant for trees with up to 29 vertices.  The conjecture is also known for certain special classes of trees, including spiders~\cite{MMW}, caterpillars \cite{APZ,loebl2018isomorphism}, 2-spiders~\cite{huryn2020few}, and proper trees with diameter at most five~\cite{APdMOZ}.

Our approach to Stanley's problem is to compare the CSF to other graph invariants.  In particular, proving that a particular invariant can be computed from the CSF gives indirect evidence for the positive answer to Stanley's question and enables us to leverage the information from other invariants to learn more about what structural information about a tree is encoded in its CSF.  As an example of this approach, Martin, Morin and Wagner \cite{MMW} proved that the CSF determines the \defterm{subtree polynomial} $\stp{T}$, first studied by Chaudhary and Gordon \cite{CG} and Eisenstat and Gordon \cite{EG}, which enumerates subtrees of~$T$ by size and number of leaves.  In particular, the result of~\cite{MMW} implies that two elementary invariants of a tree, its degree and path sequences, are recoverable from its CSF, which is far from obvious from first principles. Similarly, Aliste--Prieto and Zamora \cite{APZ} proved that proper caterpillars are distinguished by their CSFs by characterizing graphs with the same \defterm{$\mathcal{L}$-polynomial}, a specialization of the U-polynomial of Noble and Welsh \cite{NobleWelsh} that is also obtainable from the CSF.

The question of which graphs are determined by a given polynomial invariant has been studied in several contexts: see, \emph{e.g.} \cite{noy2003graphs,mier2004graphs}. According to Tutte \cite{tutte1974codichromatic}, the problem of finding non-isomorphic graphs with the same value of a particular invariant can be traced back at least to unpublished work of Marion Gray in the 1930's, who found a pair of non-isomorphic graphs with the same doubly-indexed Whitney numbers (for which see \cite{whitney1932coloring}).  Finally, there has been significant effort to build a theory of graph polynomials that distinguish graphs: see, e.g., \cite{makowsky2008zoo,ellis2016graph} and references therein.

In this paper, we describe exactly the hierarchy among several polynomial invariants of trees, focusing on the three-variable \defterm{generalized degree polynomial} $\gdp{T}$ introduced by Crew \cite{Crew-note}; the \defterm{half-generalized degree polynomial}, a two-variable specialization $\hdp{T}$ of $\gdp{T}$, studied by Wang, Yu and Zhang \cite{WYZ}; and the two-variable Eisenstat-Gordon \defterm{subtree polynomial} $\stp{T}$ mentioned previously.
All of these invariants, which we will define precisely in Section~\ref{sec:background}, can be obtained from the CSF.

Our results are as follows:

First, we prove (Theorem~\ref{thm:crew}) that the CSF of a tree linearly determines its generalized degree polynomial linearly.  This result was conjectured by Crew in \cite{Crew-note}, and strengthens the result of Wang, Yu and Zhang \cite{WYZ} that $\csf{T}$ determines $\hdp{T}$.
Our proof of Theorem~\ref{thm:crew} is completely explicit: we exhibit an integer matrix that transforms the vector of coefficients of $\csf{T}$, written in the power-sum basis, into the vector of coefficients of $\gdp{T}$.  The entries of the matrix are purely combinatorial and depend only on the number of vertices of $T$.  The proof is given in Section~\ref{sec:crew}.

Second, we prove (Theorem~\ref{thm:sub-hdp}) that the polynomials $\hdp{T}$ and $\stp{T}$ are related by an invertible linear transformation, hence contain the same information.
To prove this result, we construct square integer matrices $\mathsf{M},\mathsf{N}$, invertible over $\Qq$, such that $\mathsf{M}\mathsf{H}=\mathsf{N}\mathsf{S}$, where $\mathsf{H},\mathsf{S}$ are the vectors of coefficients of $\hdp{T}$ and $\stp{T}$ respectively.  This proof is somewhat less combinatorial than that of Theorem~\ref{thm:crew} in the sense that there are no direct combinatorial interpretations for the entries of the transition matrices $\mathsf{N}^{-1}\mathsf{M}$ and $\mathsf{M}^{-1}\mathsf{N}$ (indeed, $\mathsf{N}$ is not invertible over $\Zz$).  The proof is given in Section~\ref{sec:HDP-STP}.
Observe that Theorems~\ref{thm:crew} and~\ref{thm:sub-hdp} together recover and strengthen the main result of~\cite{MMW}.

Third, we show (Theorem~\ref{thm:equiv-class}) how to construct arbitrarily large families of non-isomorphic trees sharing the same half-generalized degree polynomial, or, equivalently, the same subtree polynomial.  The trees in question are all \defterm{caterpillars}, which are indexed by integer compositions.  Billera, Thomas and van~Willigenburg~\cite[Theorem 3.6]{BTvW} showed that every composition $\alpha$ admits a certain unique irreducible factorization of the form $\alpha_1\circ\cdots\circ\alpha_k$ (the operation $\circ$ is defined in Section~\ref{sec:same-HDP} below).  We show that replacing any of the $\alpha_i$'s with their reversals produces a caterpillar with the same half-generalized degree polynomial.
This result proves and strengthens a conjecture of Eisenstat and Gordon~\cite[Conjecture~2.8]{EG} on constructing pairs of caterpillars with the same subtree polynomial.  The main tool in the proof is a recurrence for the half-generalized degree polynomial of a tree, using the operation of \defterm{near-contraction} introduced in~\cite{APdMOZ}.  The recurrence is proved in Section~\ref{sec:recurrence} and the construction of half-generalized degree polynomial equivalence classes in Section~\ref{sec:same-HDP}.

The relationships between these invariants are depicted in Figure~\ref{fig:relationships}.

\begin{figure}[t]
\begin{center}
\begin{tikzpicture}
\newcommand{\dy}{2.5}
\draw[->](0,-.75)--(0,-\dy+.75);

\node[rectangle,draw] at (0,0) {\begin{tabular}{c}
chromatic symmetric function\\
$\csf{T}$ (Stanley) \end{tabular}};

\draw[dashed,->] (2.75,0) -- (4.5,-2*\dy+.75); \node at (4.75,-\dy+.375) {\cite{MMW}};

\draw[dashed,->] (-2.75,0) -- (-5,-2*\dy+.75); \node at (-4.75,-\dy+.375) {\cite{WYZ}};

\node[rectangle,draw] at (0,-\dy) {\begin{tabular}{c}
generalized degree polynomial\\
$\gdp{T}$ (Crew) \end{tabular}};

\draw[dashed,->] (0,-\dy-.75) -- (-3,-2*\dy+.75); \node at (0,-1.5*\dy) {\begin{tabular}{c}strict (Tang)\end{tabular}};

\node[rectangle,draw] at (-4,-2*\dy) {\begin{tabular}{c}
half-generalized degree polynomial\\
$\hdp{T}$ (Wang--Yu--Zhang) \end{tabular}};

\draw[<->](-.5,-2*\dy)--(1.5,-2*\dy);

\node[rectangle,draw] at (4,-2*\dy) {\begin{tabular}{c}
subtree polynomial\\
$\stp{T}$ (Eisenstat--Gordon) \end{tabular}};

\end{tikzpicture}
\end{center}
\caption{Relationships between isomorphism invariants of trees.
The invariant at the tail of an arrow determines the invariant at the head. 
Solid arrows are results of this paper; dotted arrows indicate previously known results.}\label{fig:relationships}
\end{figure}
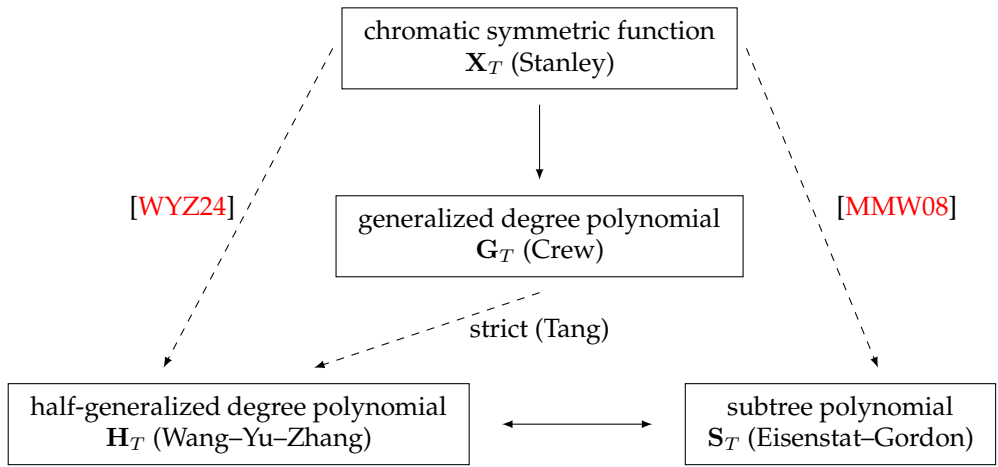

In the final section of the paper, we discuss possible directions for further research, including extending the constructions to non-caterpillar trees; comparison of the distinguishing power of the generalized and half-generalized degree polynomials; and another polynomial that simultaneously generalizes the half-generalized degree polynomial and the subtree polynomial.

In an earlier version of this paper, we observed that we could not certify that the GDP is a strictly stronger invariant than the HDP, based on exhaustive computation of these invariants for all trees on 18 or fewer vertices.  Subsequently, Michael Tang [personal communication] discovered two trees on 19 vertices with the same HDP but different GDPs.  More details are given in Section~\ref{gdp-and-hdp}.

\section{Background} \label{sec:background}

We make use of standard definitions and basic facts from graph theory and combinatorics; see, e.g., \cite{EC}.  A \defterm{tree} is a connected acyclic graph $T=(V,E)$ with at least one vertex.  A \defterm{subtree} $S$ of a tree $T$ is a connected subgraph whose vertex set $V(S)$ is nonempty.  The set of subtrees of $T$ is denoted $\SSS(T)$.

Let $n$ be a nonnegative integer.  A \defterm{partition} of $n$ is a non-increasing sequence $\lambda=(\lambda_1,\dots,\lambda_k)$ of positive integers that add up to $n$; in this case we write $\lambda\partn n$.  The number $k=\ell(\lambda)$ is the \defterm{length} of $\lambda$ and the numbers $\lambda_i$ are its \defterm{parts}.

A \defterm{symmetric function} is a formal power series in commuting variables $x_1,x_2,\dots$, say with coefficients in $\mathbb{Q}$, that is invariant under permutations of the $x_i$'s.  The set $\Lambda_n$ of all homogeneous degree-$n$ symmetric functions is a vector space of dimension equal to the number of partitions of~$n$.  For $\lambda=(\lambda_1,\dots,\lambda_k)\partn n$, the \defterm{monomial symmetric function} $m_\lambda$ is the sum of all monomials of the form $x_{i_1}^{\lambda_1}\cdots x_{i_k}^{\lambda_k}$, where the $i_j$ are distinct integers.  The \defterm{power-sum symmetric function} is $p_\lambda=\prod_{i=1}^k\left(\sum_{i\geq1} x_i^{\lambda_i}\right)$.  Both $\{m_\lambda\colon\lambda\partn n\}$ and $\{p_\lambda\colon\lambda\partn n\}$ are vector space bases for $\Lambda_n$.  (There are many other important bases of the symmetric functions, but these are the two that are relevant for our purposes.) For further details on symmetric functions, the reader is referred to, e.g., \cite[Chapter~7]{ECTwo}.

\subsection{The chromatic symmetric function}

Let $G=(V,E)$ be a simple graph with $|V|=n$. A \defterm{proper coloring} of $G$ is a map $\kappa:V\rightarrow\Nn_{>0}$ such that $\kappa(u)\neq\kappa(v)$ whenever $uv\in E$.
The \defterm{chromatic symmetric function}, introduced by Stanley \cite{Stanley-CSF}, is the formal power series
\[\csf{G}=\sum_{\substack{\text{proper colorings}\\ \kappa:V\to\Nn_{>0}}} \prod_{v\in V} x_{\kappa(v)},\]
which is a symmetric function in commuting variables $x_1,x_2,\dots$, homogeneous of degree~$n$.  Observe that setting $x_1=\cdots=x_k=1$ and $x_i=0$ for all $i>k$ recovers the number of proper colorings $\kappa:V\to[k]$, which is the chromatic polynomial of $G$ evaluated at $k$.

Consider the expansion of the chromatic symmetric function of a graph $G$ in the power-sum basis:
\[\csf{G}=\sum_{\lambda\,\partn\,n} c_\lambda(G) p_\lambda.\]
Stanley gave a combinatorial description of the numbers $c_\lambda(G)$ \cite[Thm.~2.5]{Stanley-CSF}, which is cancellation-free if and only if $G=T$ is a tree.  In that case, as observed in \cite[eqn.~(7)]{MMW}, one has
\begin{equation} \label{c-lambda}
c_\lambda(T)=(-1)^{n-\ell(\lambda)}|\{F\subseteq E\colon \type(F)=\lambda\}|.
\end{equation}
Here $\type(F)$ is the partition of $|V|$ whose parts are the sizes of the connected components of the graph $(V,F)$.
Thus we can regard the numbers $c_\lambda(T)$ as graph invariants derivable from the chromatic symmetric function.

\begin{example} \label{running-example}
The two trees on four vertices are the path $P_4$ and the star $S_4$, shown below.
\begin{center}
\begin{tikzpicture}
\foreach \x in {1,2,3,4} { \draw[fill=black] (\x,0) circle (.07); \node at (\x,.35) {\footnotesize\x}; }
\draw[thick] (1,0)--(4,0);
\node at (2.5,-.9) {path $P_4$};
\begin{scope}[shift={(7,0)}]
\draw[fill=black] (0,0) circle (.07); \node at (.25,.15) {\footnotesize $1$};
\foreach \a/\b in {90/2,210/3,330/4} { \draw[fill=black] (\a:1) circle (.07); \draw[thick] (0,0)--(\a:1); \node at (\a:1.35) {\footnotesize $\b$}; }
\node at (0,-.9) {star $S_4$};
\end{scope}
\end{tikzpicture}
\end{center}

In the monomial basis, their chromatic symmetric functions are
\begin{align*}
\csf{P_4} &= 24m_{1111} + 6m_{211} + 2m_{22},\\
\csf{S_4} &= 24m_{1111} + 6m_{211} + m_{31}.
\end{align*}
For instance, there are two proper colorings of $P_4$ with two red and two blue vertices, but no such proper colorings for $S_4$.
On the other hand, $S_4$ admits one proper coloring with one red and three blue vertices, while $P_4$ has no such proper coloring.
The expansions of these chromatic symmetric functions in the power-sum basis are
\begin{align*}
\csf{P_4} &= p_{1111} -3p_{211} + p_{22} + 2p_{31} -p_4,\\
\csf{S_4} &= p_{1111} -3p_{211} + 3p_{31} -p_4.
\end{align*}
For instance, the coefficient $c_{22}(P_4)$ is 1 because the edge set $\{12,34\}$ induces a subgraph with two components of size~2, while $c_{22}(S_4)=0$ because the star has no such pair of edges.
Observe that both trees have the same chromatic polynomial: each has $k(k-1)^3$ proper colorings with $k$ colors.
\end{example}

\subsection{The generalized and half-generalized degree polynomials}

Let $T=(V,E)$ be a tree with $|V|=n$.  For a vertex set $A\subseteq V$, define
\begin{align*}
E(A) &= \{\text{edges of $E$ with both endpoints in $A$}\}, && e(A) = |E(A)|,\\
D(A) &= \{\text{edges of $E$ with exactly one endpoint in $A$}\}, && d(A) = |D(A)|,
\end{align*}
and let
\begin{equation} \label{gdp-coeffs}
\gdpco_T(a,b,c) = |\{A\subseteq V(T)\colon |A|=a,\ d(A)=b,\ e(A)=c\}|.
\end{equation}
The \defterm{generalized degree polynomial} (or GDP) of $T$, introduced by Crew \cite{Crew-thesis}, is
\begin{equation} \label{gdp}
\gdp{T} = \gdp{T}(x,y,z) = \sum_{A\subseteq V} x^{|A|} y^{d(A)} z^{e(A)} = \sum_{a,b,c} \gdpco_T(a,b,c) x^a y^b z^c.
\end{equation}
The reason for the name is that when $A=\{v\}$, the number $d(A)$ is just the degree of vertex~$v$ (i.e., the number of edges incident to~$v$). Crew \cite[\S4.3]{Crew-thesis} introduced the \defterm{generalized degree sequence} of $T$ as the multiset of triples $(|A|,d(A),e(A))$ for $A\subseteq V$; our generating function is equivalent.

For $A\subseteq V$, let $T|_A$ denote the subgraph induced by $A$, and define
\begin{equation} \label{hdp-coeffs}
\hdpco_T(b,c) = |\{A\subseteq V\colon T|_A\text{ connected},\ d(A)=b,\ e(A)=c\}| = \gdpco_T(c+1,b,c).
\end{equation}
The \defterm{half-generalized degree polynomial} (or HDP) of $T$ is defined as
\begin{equation}\label{hdp}
\begin{aligned}
\hdp{T} = \hdp{T}(y,z) &= \sum_{\substack{\0\neq A\subseteq V\\ T|_A\text{ connected}}} y^{d(A)} z^{e(A)}
&= \sum_{S\in\SSS(T)} y^{d(S)} z^{e(S)} = \sum_{b,c} \hdpco_T(b,c) y^b z^c.
\end{aligned}
\end{equation}
Evidently, if $|A|=0$, then $(d(A),e(A))=(0,0)$, and if $|A|=n$, then $(d(A),e(A))=(0,n-1)$.  Otherwise, if $1\leq|A|\leq n-1$, then:
\begin{itemize}
\item $1\leq d(A)\leq n-1$;
\item $0\leq e(A)<|A|$; and
\item $|A|\leq d(A)+e(A)\leq n-1$.
\end{itemize}
The only inequality that is not immediate is $d(A)+e(A)\geq|A|$.  To see this, observe that the forest $T|_A$ has $e(A)$ edges and $|A|-e(A)$ components.  Each component $K$ has at least one edge with one endpoint in $K$ and one in $V(T)\sm A$.  These edges are all distinct external edges, so $|A|-e(A)\leq d(A)$.  Accordingly, we can write the generalized and half-generalized degree polynomials as
\begin{align*}
\gdp{T}(x,\,y,\,z)&=1+x^nz^{n-1}+\sum_{a=1}^{n-1} \sum_{c=0}^{a-1} \sum_{b=1}^{n-1-c} \gdpco_T(a,b,c) x^a y^b z^c,\\
\hdp{T}(y,\,z)&=z^{n-1}+\sum_{c=0}^{n-1} \sum_{b=1}^{n-1-c} \hdpco_T(b,c) y^b z^c.
\end{align*}
Crew \cite{Crew-note} conjectured that $\gdp{T}$ can be recovered from $\csf{T}$.  Wang, Yu, and Zhang \cite[Thm.~5.3]{WYZ} proved that $\hdp{T}$ can be recovered linearly from $\csf{T}$; that is, the coefficients $\hdpco_T(b,c)$ of the HDP are linear functions of the coefficients $c_\lambda(T)$ of~\eqref{c-lambda}.

\begin{example} \label{running-example:2}
Consider the star $S_4$, with vertices labeled as in Example~\ref{running-example}.
Its generalized and half-generalized degree polynomials are computed from the definitions~\eqref{gdp} and~ \eqref{hdp} as follows
(vertex sets are abbreviated, e.g., 23 for $\{2,3\}$).
\[
\begin{array}{c|cccccccc}
A & \0 & 1 & 2,3,4 & 12,13,14 & 23,24,34 & 123,124,134 & 234 & 1234 \\ \hline
|A| & 0 & 1&1 & 2&2 & 3&3 & 4\\
d(A) & 0 & 3&1 & 2&2 & 1&3 & 0\\
e(A) & 0 & 0&0 & 1&0 & 2&0 & 3\\ \hline
\rule{0bp}{10bp} 
\gdp{S_4} = & 1 & +\:xy^3 &+\: 3xy &+\:3x^2y^2z &+\: 3x^2y^2 &+\:3x^3yz^2 &+\:x^3y^3 &+\:x^4z^3  \\
\hdp{S_4} = &  & y^3 &+\: 3y &+\:3y^2z & &+\:3yz^2 & &+\:z^3
\end{array}
\]
For the path $P_4$, the computation is as follows:
\[
\begin{array}{c|cccccccccc}
A & \0 & 1,4 & 2,3 & 12,34 & 23 & 13,24 & 14 & 123,234 & 124,134 & 1234 \\ \hline
|A| & 0 & 1&1 & 2&2&2&2 & 3&3 & 4\\
d(A) & 0 & 1&2 & 1&2&3&2 & 1&2 & 0\\
e(A) & 0 & 0&0 & 1&1&0&0 & 2&1 & 3\\ \hline
\rule{0bp}{10bp} 
\gdp{P_4} = & 1 & +\:2xy &+\: 2xy^2 &+\:2x^2yz &+\: x^2y^2z &+\:2x^2y^3 &+\:x^2y^2 &+\:2x^3yz^2 &+\:2x^3y^2z &+\:x^4z^3\\
\hdp{P_4} = &  & 2y &+\: 2y^2 &+\:2yz & +\:y^2z & & &+\:2yz^2 & & +\:z^3
\end{array}
\]
\end{example}

As observed by Crew \cite[pp.~83--84]{Crew-thesis}, the generalized degree polynomial is not a complete invariant for trees.  The two smallest trees with the same GDP are shown in Figure~\ref{fig:sameGDP}.  These are also the smallest trees with the same HDP.

\begin{figure}[th]
\begin{center}
\begin{tikzpicture}
\foreach \x in {0,1,2,3,4} \draw[fill=black] (\x,0) circle (.08);
\draw(0,0)--(4,0);
\foreach \x/\s in {0/0, 1/1, 2.8/3, 3.2/3, 3.8/4, 4.2/4} { \draw (\x,-1)--(\s,0);  \draw[fill=black] (\x,-1) circle (.08); }
\begin{scope}[shift={(8,0)}]
\foreach \x in {0,1,2,3,4} \draw[fill=black] (\x,0) circle (.08);
\draw(0,0)--(4,0);
\foreach \x/\s in {0/0, .8/1, 1.2/1, 2/2, 3.8/4, 4.2/4} { \draw (\x,-1)--(\s,0);  \draw[fill=black] (\x,-1) circle (.08); }
\end{scope}
\end{tikzpicture}
\end{center}
\caption{The smallest trees with the same generalized degree polynomial and the same subtree polynomial.\label{fig:sameGDP}}
\end{figure}
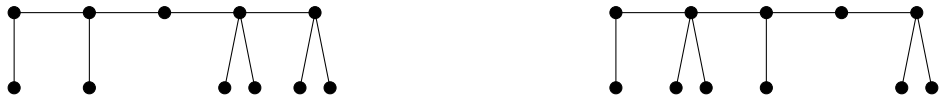

\subsection{The subtree polynomial}
Let $T=(V,E)$ be a tree with $|V|=n$, and let $S\in\SSS(T)$.  An edge of $S$ is called a \defterm{leaf edge} if at least one of its endpoints is a leaf of~$S$.
Let
\begin{align*}
E(S)&=E(V(S)) & e(S) &= |E(S)|,\\
L(S)&=\{\text{leaf edges of $S$}\}, & \ell(S) &= |L(S)|.
\end{align*}
and define
\begin{equation} \label{stp-coeffs}
\stpco_T(i,j) = |\{U\in\SSS(T)\colon e(U)=i,\ \ell(U)=j\}|.
\end{equation}
The \defterm{subtree polynomial} (or STP) of $T$, introduced by Eisenstat and Gordon~\cite{EG} (and equivalent to the \defterm{greedoid Tutte polynomial} introduced in \cite{GMcM}) is defined as
\begin{equation}\label{stp}
\stp{T}(q,r) = \sum_{S\in\SSS(T)} q^{e(S)} r^{\ell(S)} = \sum_{i,j} \stpco_T(i,j) q^i r^j.
\end{equation}
Martin, Morin and Wagner \cite{MMW} proved that the chromatic symmetric function determines the subtree polynomial linearly.  The STP is not a complete tree invariant; as for the GDP and HDP, the two trees shown in Figure~\ref{fig:sameGDP} are the smallest pair with the same STP.

\begin{example} \label{running-example:3}
Consider the path $P_4$, with vertices labeled as in Example~\ref{running-example}.
Its subtree polynomial is computed from the definition~\eqref{stp} as follows, where each subtree $S$ is indicated by its vertex set $V(S)$.
\[
\begin{array}{c|cccc}
V(S) & 1,2,3,4 & 12,23,34 & 123,234 & 1234 \\ \hline
e(S) & 0 & 1 & 2 & 3\\
\ell(S) & 0 & 1 & 2 & 2\\\hline
\rule{0bp}{10bp} 
\stp{P_4} = & 4 &+\: 3qr &+\:2q^2r^2 &+\:q^3r^2
\end{array}
\]
For the star $S_4$, the computation is as follows:
\[
\begin{array}{c|cccc}
V(S) & 1,2,3,4 & 12,13,14 & 123,124,134 & 1234 \\ \hline
e(S) & 0 & 1 & 2 & 3\\
\ell(S) & 0 & 1 & 2 & 3\\\hline
\rule{0bp}{10bp} 
\stp{S_4} = & 4 &+\: 3qr &+\:3q^2r^2 &+\:q^3r^3
\end{array}
\]
\end{example}

\section{The chromatic symmetric function determines the generalized degree polynomial} \label{sec:crew}

In this section we prove that the coefficients of the generalized degree polynomial of a tree are determined linearly by the coefficients of the power-sum expansion of its chromatic symmetric function.  This result proves Crew's conjecture from~\cite{Crew-note}.

Throughout this section, let $T=(V,E)$ be a tree with $|V|=n$.  For $F\subseteq E$ and $A\subseteq V$, write $F(A)=F\cap E(A)$ and $F(\bar A)=F\cap E(\bar A)$, where $\bar A=V\sm A$.  Say that $A$ is \defterm{$F$-pure} if $A$ is a union of vertex sets of connected components of the graph $(V,F)$. Equivalent conditions are $F\cap D(A)=\0$ and $F\subseteq E(A)\cup E(\bar A)$.  More specifically, say that $A$ is \defterm{$F$-pure of type $\mu$} if $A$ is a union of vertex sets of components of $F$ whose sizes are the parts of $\mu$ (so, in particular, $|A|=|\mu|$).

\begin{example}
Let $G$ be the graph shown below and let $A=\{1,2,5\}$.
\begin{center}
\begin{tikzpicture}
\foreach \a in {1,...,5} { \draw[fill=black] (18+\a*72:1) circle (.07); \node at (18+\a*72:1.3) {\footnotesize\sf\a}; }
\draw(18:1)--(90:1)--(162:1)--(234:1)--(306:1)--(18:1)--(162:1)--(306:1);
\end{tikzpicture}
\end{center}
  If $F=\{12\}$ or $F=\{12,34\}$, then $A$ is $F$-pure of type $(2,1)$, since $\{1,2\}$ and $\{5\}$ are vertex sets of components of $F$.  Likewise, if $F=\0$ or $F=\{12,15\}$, then $A$ is $F$-pure of type $(1,1,1)$ or $(3)$ respectively.  On the other hand, if $F=\{12,45\}$, then $A$ is not $F$-pure.\end{example}

For partitions $\lambda$ and $\mu$ define
\[\binom{\lambda}{\mu}:=\prod_{i=1}^n \binom{m_i(\lambda)}{m_i(\mu)}\]
where $m_i(\lambda)$ is the number of occurrences of~$i$ as a part of $\lambda$.
Observe that if $\type(F)=\lambda$, then the number of $F$-pure sets of type~$\mu$ is $\binom{\lambda}{\mu}$.

We will need the following simple combinatorial identity:
\begin{lemma} \label{useful-lemma}
For all sets $P$ and numbers $q$, we have
\[\sum_{F\subseteq P} (-1)^{|F|+q} \binom{|F|}{q} =
\begin{cases} 1 & \text{ if } |P|=q, \\ 0 & \text{ if } |P|\neq q.\end{cases}\]
\end{lemma}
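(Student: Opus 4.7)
The plan is to prove this by a double-counting swap, interpreting $\binom{|F|}{q}$ as counting $q$-element subsets of $F$. Writing $p=|P|$, I would first rewrite
\[
\sum_{F\subseteq P}(-1)^{|F|+q}\binom{|F|}{q} \;=\; \sum_{\substack{Q\subseteq P\\ |Q|=q}}\ \sum_{\substack{F\subseteq P\\ F\supseteq Q}} (-1)^{|F|+q}.
\]
For each fixed $Q$, parameterize the inner sum by $F'=F\sm Q\subseteq P\sm Q$, so that $|F|=q+|F'|$ and the sign becomes $(-1)^{|F'|}$. The inner sum then collapses to $\sum_{F'\subseteq P\sm Q}(-1)^{|F'|}=(1-1)^{p-q}$ by the binomial theorem.

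A case split on $p$ versus $q$ finishes the argument. If $p<q$, the outer sum is vacuous since $P$ has no $q$-subsets, so the total is $0$. If $p>q$, each inner sum equals $(1-1)^{p-q}=0$, so the total is $0$. If $p=q$, there is a unique $Q=P$ with $|Q|=q$, and the inner sum is $(1-1)^0=1$, giving the total $1$. This matches the claimed two-case formula.

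I do not anticipate a serious obstacle: the identity is a routine binomial manipulation, and the subset-swap makes it essentially automatic. An alternative route would be to group by $f=|F|$, apply the subset-of-subset identity $\binom{p}{f}\binom{f}{q}=\binom{p}{q}\binom{p-q}{f-q}$, reindex by $j=f-q$, and invoke the binomial theorem to get the same factor $\binom{p}{q}(1-1)^{p-q}$. Either route is short; I would present the double-counting version as it is slightly more transparent.
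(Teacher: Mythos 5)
Your proof is correct, but it takes a genuinely different (if equally routine) route from the paper's. The paper groups the sum by $k=|F|$ and applies the trinomial revision identity $\binom{p}{k}\binom{k}{q}=\binom{p}{q}\binom{p-q}{k-q}$ before invoking the binomial theorem, arriving at $\binom{p}{q}(1-1)^{p-q}$ --- this is precisely the ``alternative route'' you sketch in your final paragraph. Your main argument instead expands $\binom{|F|}{q}$ as a count of $q$-subsets $Q\subseteq F$ and swaps the order of summation, so that each inner sum over $F\supseteq Q$ telescopes to $(1-1)^{p-q}$ directly, with no binomial identity needed. The two approaches buy slightly different things: the paper's is a one-line algebraic manipulation citing a standard reference, while yours is self-contained and handles the degenerate case $p<q$ more transparently (the outer sum is simply empty, whereas in the paper's version one must note that the factor $\binom{p}{q}$ vanishes before worrying about the negative exponent in $(1-1)^{p-q}$). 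Either is perfectly acceptable here.
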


\begin{proof}
Let $p=|P|$.  Using a standard binomial identity \cite[equation ~(5.21), p.167]{GKP}, we have
\begin{align*}
\sum_{k=0}^p (-1)^k \binom{p}{k} \binom{k}{q}
&= \sum_{k=0}^p (-1)^k \binom{p}{q} \binom{p-q}{k-q}
&= \binom{p}{q} \sum_{k=q}^p (-1)^k \binom{p-q}{k-q}
&= (-1)^q \binom{p}{q} \sum_{j=0}^{p-q} (-1)^{j} \binom{p-q}{j}.
\end{align*}
The sum is the binomial expansion of $(1-1)^{p-q}$.  In particular it vanishes unless $p=q$, in which case the entire expression is 1.
\end{proof}

For $\lambda\partn n$ and numbers $a,b,c$, define
\[\omega(\lambda,\,a,\,b,\,c) = (-1)^{n-b-1} \sum_{\mu\,\partn\,a} \binom{a-\ell(\mu)}{c}\binom{\lambda}{\mu} \binom{n-\ell(\lambda)+\ell(\mu)-a}{n-b-c-1}.\]

\begin{theorem}\label{thm:crew}
The chromatic symmetric function determines the generalized degree polynomial linearly.
\end{theorem}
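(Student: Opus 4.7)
The natural candidate identity is
\[\gdpco_T(a,b,c) = \sum_{\lambda\,\partn\,n} \omega(\lambda,a,b,c)\,c_\lambda(T),\]
which, combined with~\eqref{c-lambda}, exhibits $\gdp{T}$ as an integer-linear combination of the power-sum coefficients of $\csf{T}$ via a matrix whose entries depend only on~$n$. The plan is to verify this identity by substituting the combinatorial formula~\eqref{c-lambda} into the right-hand side and showing, via a reparametrization and Lemma~\ref{useful-lemma}, that everything collapses to the left-hand side.

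First, I would combine the explicit formula for $c_\lambda(T)$ with the definition of $\omega$ and then trade the outer sum over partitions $\lambda\partn n$ for a sum over edge subsets $F\subseteq E$ with $\type(F)=\lambda$. The factor $\binom{\type(F)}{\mu}$ appearing in $\omega$ counts, as noted just before Lemma~\ref{useful-lemma}, the number of subsets $A\subseteq V$ that are $F$-pure of type $\mu$. Swapping the order of summation turns the expression into a double sum over pairs $(F,A)$ with $A$ being $F$-pure of size $a$.

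The decisive step is a reparametrization: $A$ is $F$-pure precisely when $F\cap D(A)=\0$, equivalently when $F$ decomposes as a disjoint union $F=F_A\cupdot F_{\bar A}$ with $F_A\subseteq E(A)$ and $F_{\bar A}\subseteq E(\bar A)$ chosen independently. Since any subgraph of a tree is a forest, $\ell(\type(F))=n-|F|$, and in these new coordinates the quantities $a-\ell(\mu)$ and $n-\ell(\type(F))+\ell(\mu)-a$ simplify to $|F_A|$ and $|F_{\bar A}|$ respectively. Consequently the sum over triples $(A,F_A,F_{\bar A})$ factors, and the inner sums over $F_A\subseteq E(A)$ and $F_{\bar A}\subseteq E(\bar A)$ each take exactly the form treated by Lemma~\ref{useful-lemma}, collapsing (up to explicit sign factors) to the indicators $[e(A)=c]$ and $[e(\bar A)=n-b-c-1]$.

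Finally, since $E(A)\cupdot D(A)\cupdot E(\bar A)$ accounts for all $n-1$ edges of $T$, the two indicators together force $d(A)=b$; after verifying that the remaining signs combine to $+1$, the residual sum counts exactly the subsets $A\subseteq V$ with $|A|=a$, $d(A)=b$, and $e(A)=c$, namely $\gdpco_T(a,b,c)$. I expect the main obstacle to be the sign-and-index bookkeeping in the reparametrization step. The essential combinatorial content, however, is the clean decoupling of $F$-pure pairs $(F,A)$ into independent triples $(A,F_A,F_{\bar A})$; once this separation is visible, Lemma~\ref{useful-lemma} does the rest, and the definition of $\omega$ was engineered so that the binomial coefficients match up perfectly.
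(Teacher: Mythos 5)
Your proposal is correct and follows essentially the same route as the paper's proof: substitute~\eqref{c-lambda} into the candidate identity, convert the sum over partitions into a sum over edge sets $F$, interpret $\binom{\type(F)}{\mu}$ as counting $F$-pure sets, decompose $F=F(A)\cupdot F(\bar A)$ so the sum factors, and apply Lemma~\ref{useful-lemma} twice to obtain the indicators $[e(A)=c]$ and $[e(\bar A)=n-b-c-1]$, finishing with $d(A)=n-1-e(A)-e(\bar A)$. The sign and index bookkeeping you flag does work out exactly as you anticipate.
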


\begin{proof}
We will prove that
\begin{equation} \label{unicorns-and-rainbows}
\gdpco_T(a,b,c) = \sum_{\lambda\,\partn\,n} c_\lambda(T) \omega(\lambda,a,b,c)
\end{equation}
where $\gdpco_T(a,b,c)$ is defined as in~\eqref{gdp-coeffs}.

Let $R$ be the right-hand side of~\eqref{unicorns-and-rainbows}.  We start by plugging in the definitions of $c_\lambda(T)$ and $\omega ( \lambda, a, b, c )$:
\begin{align*}
R
&= \sum_{\lambda\,\partn\,n} (-1)^{n-\ell(\lambda)}|\{F\subseteq E\colon \type(F)=\lambda\}| (-1)^{n-b-1} \sum_{\mu\,\partn\,a} \binom{a-\ell(\mu)}{c}\binom{\lambda}{\mu} \binom{n-\ell(\lambda)+\ell(\mu)-a}{n-b-c-1} \\
&= \sum_{F\subseteq E} (-1)^{n-\ell(\type(F))} (-1)^{n-b-1} \sum_{\mu\,\partn\,a} \binom{a-\ell(\mu)}{c}\binom{\type(F)}{\mu} \binom{n-\ell(\type(F))+\ell(\mu)-a}{n-b-c-1} \\
&= \sum_{F\subseteq E} (-1)^{|F|+n-b-1} \sum_{\mu\,\partn\,a} \binom{a-\ell(\mu)}{c}\binom{\type(F)}{\mu} \binom{|F|+\ell(\mu)-a}{n-b-c-1} \\
&= \sum_{F\subseteq E} (-1)^{|F|+n-b-1} \sum_{\mu\,\partn\,a} \sum_{\substack{A\in\binom{V}{a}\\ \text{$F$-pure of type $\mu$}}}
\binom{a-\ell(\mu)}{c} \binom{|F|+\ell(\mu)-a}{n-b-c-1} \\
&= \sum_{F\subseteq E} (-1)^{|F|+n-b-1} \sum_{\substack{A\in\binom{V}{a}\\ \text{$F$-pure}}}
\binom{a-\ell(\type(F(A)))}{c} \binom{|F|+\ell(\type(F(A)))-a}{n-b-c-1}
	\intertext{where $F(A)$ is regarded as an edge set on $A$, so that $\type(F(A))\partn|A|=a$.
	Now recall that $A$ is $F$-pure if and only if $F\subseteq E(A)\cup E(\bar A)$.
	So we may switch the order of summation to obtain}
R &=  \sum_{A\in\binom{V}{a}} \sum_{F\subseteq E(A)\cup E(\bar A)} (-1)^{|F|+n-b-1}
\binom{a-\ell(\type(F(A)))}{c} \binom{|F|+\ell(\type(F(A)))-a}{n-b-c-1}. \\
\intertext{Every set $F\subseteq E(A)\cup E(\bar A)$ can be written uniquely as $F=F(A)\dju F(\bar A)$, with $F(A)\subseteq E(A)$ and $F(\bar A)\subseteq E(\bar A)$.  Moreover, $a-\ell(\type(F(A)))=|F(A)|$.  So now we get}
R &=  \sum_{A\in\binom{V}{a}} \sum_{F(A)\subseteq E(A)} \sum_{F(\bar A)\subseteq E(\bar A)} (-1)^{|F(A)|+|F(\bar A)|+n-b-1}
\binom{|F(A)|}{c} \binom{|F(\bar A)|}{n-b-c-1} \\
&=  \sum_{A\in\binom{V}{a}}
	\left( \sum_{F(A)\subseteq E(A)} (-1)^{|F(A)|+c} \binom{|F(A)|}{c} \right)
	\left( \sum_{F(\bar A)\subseteq E(\bar A)} (-1)^{|F(\bar A)|+n-b-c-1} \binom{|F(\bar A)|}{n-b-c-1} \right).\\
\intertext{Now, applying Lemma~\ref{useful-lemma} twice, we get}
R &= \big|\{A\subseteq [n]\colon |A|=a,\ e(A)=c,\ e(\bar A)=n-b-c-1\}\big|
\end{align*}
and the theorem follows since $d(A)=n-1-e(A)-e(\bar A)$.
\end{proof}

An immediate consequence is a formula for the degree sequence of a tree.
\begin{corollary}
The number of vertices of degree $b$ in $T$ is
\begin{align*}
|\{A\subseteq [n]\colon |A|=1,\ d(A)=b,\ e(A)=0\}|
&= \sum_{\lambda\,\partn\,n} c_\lambda(T) (-1)^{n-b-1} \sum_{\mu\,\partn\,1} \binom{1-\ell(\mu)}{0}\binom{\lambda}{\mu} \binom{n-\ell(\lambda)+\ell(\mu)-1}{n-b-0-1}\\
&= \sum_{\lambda\,\partn\,n} c_\lambda(T) (-1)^{n-b-1} m_1(\lambda) \binom{n-\ell(\lambda)}{n-b-1}.
\end{align*}
\end{corollary}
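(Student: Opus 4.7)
The plan is to specialize Theorem~\ref{thm:crew} at the parameter values $a=1$ and $c=0$.  A one-element vertex set $A=\{v\}$ trivially has $e(A)=0$, and $d(A)$ equals the degree of $v$, so the number of degree-$b$ vertices of $T$ is precisely $\gdpco_T(1,b,0)$; by the identity~\eqref{unicorns-and-rainbows} established in the proof of Theorem~\ref{thm:crew}, this equals $\sum_{\lambda\,\partn\,n} c_\lambda(T)\,\omega(\lambda,1,b,0)$.

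To obtain the stated formula, I simplify $\omega(\lambda,1,b,0)$ by inspection.  The inner sum over $\mu\partn a=1$ collapses to a single term because the only partition of $1$ is $\mu=(1)$, which has $\ell(\mu)=1$.  The three binomial factors in the definition of $\omega$ then evaluate as $\binom{a-\ell(\mu)}{c}=\binom{0}{0}=1$; $\binom{\lambda}{(1)}=\prod_i\binom{m_i(\lambda)}{m_i((1))}=\binom{m_1(\lambda)}{1}=m_1(\lambda)$, using the definition of $\binom{\lambda}{\mu}$ introduced just before Lemma~\ref{useful-lemma}; and $\binom{n-\ell(\lambda)+\ell(\mu)-a}{n-b-c-1}=\binom{n-\ell(\lambda)}{n-b-1}$.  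Multiplying by the overall sign $(-1)^{n-b-1}$ produces exactly the right-hand side of the corollary.

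Because the derivation is pure substitution, there is no real obstacle; the only small point worth noting is the evaluation of $\binom{\lambda}{(1)}=m_1(\lambda)$, which is immediate from the product definition.  The corollary is thus a direct specialization of Theorem~\ref{thm:crew}, with the combinatorial upshot that the multiplicities of all part sizes in the degree sequence of $T$ are linear functions of the power-sum coefficients $c_\lambda(T)$ of $\csf{T}$.
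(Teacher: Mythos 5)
Your proposal is correct and follows exactly the route the paper intends: the corollary is stated as an immediate specialization of equation~\eqref{unicorns-and-rainbows} at $a=1$, $c=0$, with the sum over $\mu\partn 1$ collapsing to $\mu=(1)$ and $\binom{\lambda}{(1)}=m_1(\lambda)$, precisely as you compute. Nothing is missing.
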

By contrast, the results of \cite{MMW} imply that for $b\geq 2$, the number of vertices of degree $b$ is also given by the formula
\begin{align*}
\sum_{k\geq b}\binom{k}{b}(-1)^{k+b}\sigma_k
&=\sum_{k\geq b}\binom{k}{b}(-1)^{k+b} [q^kr^k] \stp{T}(q,r)\\
&=\sum_{k\geq b}\binom{k}{b}(-1)^{k+b} \sum_{\lambda\partn n} c_\lambda(T) \binom{\ell(\lambda)-1}{\ell(\lambda)-n+k} \sum_{d=1}^k (-1)^d \sum_{j=1}^{\ell(\lambda)}\binom{\lambda_j-1}{d}.
\end{align*}
We thank Michael Tang for pointing out to us that these two expressions can be shown to be equivalent using elementary methods.

We take this opportunity to point out a minor error in~\cite{MMW}: the last line of the proof of Corollary~5 therein should read ``for every $k\geq2$'', not ``for every $k\geq1$''.  The mistake does not affect the proof since the number of leaves of $T$ can easily be recovered from $\csf{T}$, for instance as $|c_{(n-1,1)}|$.

\section{The half-generalized degree polynomial and the subtree polynomial are equivalent} \label{sec:HDP-STP}

In this section we prove that the coefficients of the half-generalized degree polynomial of a tree, and those of its subtree polynomial, determine each other linearly.  

Throughout this section, let $T=(V,E)$ be a tree with $|V|=n$.  Let $\hdpco(a,b)=\hdpco_T(a,b)$ and $\stpco(i,j)=\stpco_T(i,j)$ be the coefficients of $\hdp{T}$ and $\stp{T}$ defined in~\eqref{hdp} and~\eqref{stp} respectively.  Define column vectors
\begin{align*}
\mathsf{H}_1&=[\hdpco(0,b)]_{b=0}^{n-1}, & \mathsf{S}_1&=[\stpco(k,k)]_{k=0}^{n-1},\\
\mathsf{H}_2&=[\hdpco(a,b)]_{1\leq a,b;\ a+b\leq n-1}, & \mathsf{S}_2&=[\stpco(i,j)]_{2\leq j\leq i\leq n-1}.
\end{align*}
Observe that $\hdp{T}$ is determined by the entries of $\mathsf{H}_1$ and $\mathsf{H}_2$ together,
because $\hdpco(a,b)=0$ for all other $(a,b)$, except $\hdpco(n-1,0)=1$.
Similarly, $\stp{T}$ is determined by the numbers $\stpco(0,0)=n$, $\stpco(1,1)=n-1$, and $\stpco(i,j)$ for $2\leq j\leq i\leq n-1$ 
, since $s_T(i,1)=0$ if $i>1$ and also $s_T(i,j)=0$ when  $i<j$.

\begin{theorem}\label{thm:sub-hdp}
The half-generalized degree polynomial and the subtree polynomial determine each other linearly.
Specifically, there exist nonsingular integer matrices $\mathsf{P},\mathsf{M},\mathsf{N}$ such that $\mathsf{P}\mathsf{H}_1=\mathsf{S}_1$ and $\mathsf{M}\mathsf{H}_2=\mathsf{N}\mathsf{S}_2$.
\end{theorem}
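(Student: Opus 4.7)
My plan is to derive all three matrix relations from a single polynomial identity connecting the two invariants. Specifically, I would show that for every tree $T$ on $n$ vertices,
\[
\hdp{T}(1+y,\,z) \;=\; (n-1)\,y \;+\; \stp{T}\!\bigl(z,\,1+y/z\bigr),
\]
where the right-hand side is a polynomial in $y,z$ since $\ell(S)\le e(S)$ for every subtree $S\in\SSS(T)$. I would prove this by double counting pairs $(S,F)$ with $S\in\SSS(T)$ and $F\subseteq D(S)$. Using the expansion $(1+y)^{d(S)}=\sum_{F\subseteq D(S)} y^{|F|}$, the left-hand side enumerates such pairs with weight $z^{e(S)}y^{|F|}$. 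Because $T$ is a tree, each edge in $F$ attaches a unique new vertex to $V(S)$, so the pair $(S,F)$ corresponds bijectively to a pair $(S',L')$, where $S'=S\cup F$ is an enlarged subtree and $L'$ is the set of leaf vertices of $S'$ introduced by $F$, subject only to the restriction $L'\ne V(S')$ (which would degenerately force $S=\emptyset$). Rewriting the weight as $z^{e(S')-|L'|}y^{|L'|}$ and summing yields $\sum_{S'}z^{e(S')}(1+y/z)^{|\Lambda(S')|}$ minus the excluded terms, where $\Lambda(S')$ denotes the set of leaf vertices of $S'$. Since $|\Lambda(S')|$ coincides with $\ell(S')$ except on single-edge subtrees (where $|\Lambda|=2$ but $\ell=1$), careful bookkeeping of this discrepancy together with the excluded single-edge terms produces the correction $(n-1)y$.

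Next, I equate coefficients of $y^k z^c$ on both sides to obtain
\[
\sum_b \binom{b}{k}\hdpco_T(b,c) \;=\; (n-1)\,\delta_{(k,c),(1,0)} \;+\; \sum_j \binom{j}{k}\stpco_T(c+k,j).
\]
This system simultaneously encodes all three matrix relations. For the boundary ($c=0$) case, using $\stpco_T(k,j)=0$ for $2\le j<k$ collapses the right-hand side to $\stpco_T(k,k)$ when $k\ge 2$, giving the Pascal-style formula $\stpco_T(k,k)=\sum_b\binom{b}{k}\hdpco_T(b,0)$ from which $\mathsf{P}$ is assembled as an upper-triangular integer matrix with unit diagonal, hence unimodular. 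For the interior case, restrict to $c\ge 1$, $k\ge 1$, $c+k\le n-1$; there are exactly $\binom{n-1}{2}$ such equations, matching $|\mathsf{H}_2|=|\mathsf{S}_2|$, and the correction term vanishes, yielding $\mathsf{M}\mathsf{H}_2 = \mathsf{N}\mathsf{S}_2$. I take $\mathsf{M}$ to be block-diagonal by $c$ with blocks $\mathsf{M}^{(c)}_{k,b}=\binom{b}{k}$ for $k,b\in[1,n{-}1{-}c]$; each such block is lower-triangular with unit diagonal, so $\det\mathsf{M}=1$. I take $\mathsf{N}$ to be block-diagonal by $i=c+k$ with blocks $B^{(i)}_{k,j}=\binom{j}{k}$ indexed by $k\in[1,i{-}1]$ and $j\in[2,i]$. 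A short column manipulation using Pascal's identity $\binom{j}{k}-\binom{j-1}{k}=\binom{j-1}{k-1}$ reduces $B^{(i)}$ to an almost-triangular form and shows $\det B^{(i)}=i$; hence $\det\mathsf{N}=\prod_{i=2}^{n-1}i=(n-1)!$, so $\mathsf{N}$ is invertible over $\Qq$ but not over $\Zz$, consistent with the remark following the theorem.

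The principal obstacle will be the bijective proof of the polynomial identity, in particular the careful handling of the single-edge exceptional case that generates the $(n-1)y$ correction—this is precisely what forces the boundary relation ($\mathsf{P}$) to split off from the interior relation ($\mathsf{M},\mathsf{N}$) and is the technical heart of the argument. A secondary, purely computational, obstacle is the determinant evaluation $\det B^{(i)}=i$; only nonsingularity is strictly needed for the theorem, and this follows immediately from the linear independence of the columns $\bigl(\binom{j}{\bullet}\bigr)_{j=2}^{i}$ as polynomials in $j$, but pinning down the exact value yields the clean formula $\det\mathsf{N}=(n-1)!$ and explains why $\mathsf{N}$ fails to be unimodular.
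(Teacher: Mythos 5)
Your argument is correct and runs on the same combinatorial engine as the paper's proof: the correspondence between pairs (subtree $S$, subset of its boundary edges) and pairs (enlarged subtree $U$, set of pendant attachments), which yields the system $\sum_b\binom{b}{k}\hdpco_T(b,c)=\sum_j\binom{j}{k}\stpco_T(c+k,j)$ and the same block-diagonal matrices $\mathsf{P},\mathsf{M},\mathsf{N}$ (your value $\det\mathsf{N}=(n-1)!$ is the right one). What you do differently is package the bijection as the single substitution identity $\hdp{T}(1+y,z)=(n-1)y+\stp{T}(z,1+y/z)$, and in doing so you track the degenerate single-edge case that the paper glosses over. This is a genuine improvement: the paper's map $\xi$ is not injective when $U$ is a single edge and $K=L(U)$ (either endpoint could have been the original one-vertex $S$), so the paper's master equality fails at $(a,k)=(0,1)$, where the left side is $\sum_v\deg(v)=2(n-1)$ while the right side is $\stpco_T(1,1)=n-1$; consequently row $k=1$ of the paper's relation $\mathsf{P}\mathsf{H}_1=\mathsf{S}_1$ is off by exactly $n-1$, which is precisely your correction term $(n-1)y$. (This does not endanger the theorem --- $n-1$ is recoverable from either data set, and Claim 2 only uses $a,k\geq 1$ --- but your version is the one that is literally true, and your observation that the correction is what forces the boundary system to be treated separately is exactly right.) Two small slips to repair: the sum $\sum_j\binom{j}{k}\stpco_T(k,j)$ collapses to $\stpco_T(k,k)$ because $\binom{j}{k}=0$ for $j<k$ and $\stpco_T(k,j)=0$ for $j>k$, not because $\stpco_T(k,j)$ vanishes for $2\le j<k$ (it does not; $\stpco_T(k,2)$ counts $k$-edge paths); and your blocks $\mathsf{M}^{(c)}_{k,b}=\binom{b}{k}$ are upper rather than lower triangular in the natural ordering, though unitriangularity is all you need.
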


\begin{proof}
For a subtree $S\in\SSS(T)$, let $D(S)=D(V(S))$ (the set of edges with exactly one endpoint a vertex of $S$) and $d(S)= |D(S)|$.
For each $K\subseteq D(S)$, let $U=S\cup K$.  Then $U$ is a tree and every edge in $K$ is a leaf edge of~$U$.
Since $S$ can be recovered from the pair $(U,K)$, we have a bijection
\begin{align*}
\{(S,K)\colon S\in\SSS(T),\ K\subseteq D(S)\}
&\xrightarrow{\xi}
\{(U,K)\colon U\in\SSS(T),\ K\subseteq L(U)\}\\
(S,K) &\mapsto (S\cup K,K)
\end{align*}
with $\xi^{-1}(U,K)=(U\sm K,K)$.  For nonnegative integers $a,k$ with $a+k\leq n-1$, the map $\xi$ restricts to a bijection
\[
\{(S,K)\colon S\in\SSS(T),\ K\subseteq D(S),\ |S|=a,\ |K|=k\}
\to
\{(U,K)\colon U\in\SSS(T),\ K\subseteq L(U),\ |U|=a+k,\ |K|=k\}.
\]
Fixing $a$ and $k$ and summing over the possibilities for $b=d(S)$ and $j=\ell(U)$, we obtain equalities
\begin{equation} \label{master-equality}
\sum_{b=k}^{n-1-a} \binom{b}{k} \hdpco(a,b) = \sum_{j=k}^{n-1} \binom{j}{k} \stpco(a+k,j)
\end{equation}
for every $a,k$.
\medskip

\underline{Claim 1: The vectors $\mathsf{H}_1=[\hdpco(0,b)]_{b=0}^{n-1}$ and $\mathsf{S}_1=[\stpco(k,k)]_{k=0}^{n-1}$ determine each other.}
\medskip

Indeed, consider the $n$ equations~\eqref{master-equality} when $a=0$ and $0\leq k\leq n-1$: they are 
\[\sum_{b=k}^{n-1} \binom{b}{k} \hdpco(0,b) = \sum_{j=k}^{n-1} \binom{j}{k} \stpco(k,j) = \stpco(k,k)\]
(because if $j>k$ then $\stpco(k,j)=0$).  In matrix form, this system of equations can be written as $\mathsf{P}\mathsf{H}_1=\mathsf{S}_1$, where the matrix
\[\mathsf{P}=\left[\binom{j}{i}\right]_{i,j=0}^{n-1}\]
is evidently (uni)triangular, proving Claim~1.
\medskip

\underline{Claim 2: The vectors $\mathsf{H}_2=[\hdpco(a,b)]_{1\leq a,b;\ a+b\leq n-1}$ and $\mathsf{S}_2=[\stpco(i,j)]_{2\leq j\leq i\leq n-1}$ determine each other.}
\medskip

This time, consider the $\binom{n-1}{2}$ equations~\eqref{master-equality} when $a,k>0$ and $a+k\leq n-1$.   Then the data sets in the claim are exactly the variables appearing in the equations.  (Note that $\stpco(a+k,1)=0$, because $a+k\geq 2$ and every tree with at least two edges has at least two leaf edges.)
Therefore, the equations we are considering can be written in matrix form as $\mathsf{M}\mathsf{H}_2 = \mathsf{N}\mathsf{S}_2$, where $\mathsf{M}$ and $\mathsf{N}$ are $\binom{n-1}{2}\x\binom{n-1}{2}$ square matrices whose rows are indexed by the pairs $(a,k)$.

If we list the rows of $\mathsf{M}$ in increasing order by $a$, then in increasing order by $k$, then $\mathsf{M}$ has the block diagonal form $\mathsf{M}_2 \oplus\cdots\oplus \mathsf{M}_{n-1}$, where
\[\mathsf{M}_a = \left[ \binom{b}{k} \right]_{(b,k)\in[n-a]\x[n-a]}.\]
In particular, $\mathsf{M}_a$ is unitriangular, hence nonsingular, and so $\mathsf{M}$ is nonsingular as well (in fact invertible over $\Zz$).

On the other hand, if we list the rows of $\mathsf{N}$ in increasing order by $a+k$, then in increasing order by $a$, then $\mathsf{N}$ has the block diagonal form $\mathsf{N}_2\oplus\cdots\oplus \mathsf{N}_{n-1}$, where
\[\mathsf{N}_i = \left[\binom{k+1}{j-i}\right]_{(j,k)\in[i-1]\x[i-1]}.\] 
In particular, $\det \mathsf{N}_i$ is a binomial determinant in the sense of Gessel and Viennot~\cite{GV}, namely $\binom{2,\,3,\,\dots,\,i}{1,\,2,\,\dots,\,i-1}$ in the notation of that paper, and in particular \cite[Corollary~2]{GV} guarantees that $\det(\mathsf{N}_i)>0$.  In fact, a short calculation using \cite[Lemmas~8 and~9]{GV} shows that $\det \mathsf{N}_i=i$ for each $i$, so that $\det \mathsf{N}=n!$.
Thus we have shown that the coefficients of $\hdp{T}$ and $\stp{T}$ determine each other linearly.
\end{proof}

To illustrate the proof, for $n=5$,
the matrix equation $\mathsf{M}\mathsf{H}_2=\mathsf{N}\mathsf{S}_2$ can be written either as
\[
\left[\begin{array}{ccc|cc|c} 1&2&3&0&0&0 \\ 0&1&3&0&0&0 \\ 0&0&1&0&0&0 \\ \hline 0&0&0&1&2&0 \\ 0&0&0&0&1&0 \\ \hline 0&0&0&0&0&1 \end{array}\right]
\begin{bmatrix} \hdpco(1,1)\\ \hdpco(1,2)\\ \hdpco(2,1)\\ \hdpco(1,3)\\ \hdpco(2,2)\\ \hdpco(3,1) \end{bmatrix}
=
\begin{bmatrix} 2&0&0&0&0&0 \\ 0&1&3&0&0&0 \\ 0&0&0&0&1&4 \\ 0&2&3&0&0&0 \\ 0&0&0&1&3&6 \\ 0&0&0&2&3&4 \end{bmatrix}
\begin{bmatrix} \stpco(2,2)\\ \stpco(3,2)\\ \stpco(3,3)\\ \stpco(4,2)\\ \stpco(4,3)\\ \stpco(4,4) \end{bmatrix}
\]
illustrating the block diagonal form of $\mathsf{M}$, or as
\[
\begin{bmatrix} 1&2&3&0&0&0 \\ 0&1&3&0&0&0 \\ 0&0&0&1&2&0 \\ 0&0&1&0&0&0 \\ 0&0&0&0&1&0 \\ 0&0&0&0&0&1 \end{bmatrix}
\begin{bmatrix} \hdpco(1,1)\\ \hdpco(1,2)\\ \hdpco(2,1)\\ \hdpco(1,3)\\ \hdpco(2,2)\\ \hdpco(3,1) \end{bmatrix}
=
\left[\begin{array}{c|cc|ccc} 2&0&0&0&0&0 \\ \hline 0&1&3&0&0&0 \\ 0&2&3&0&0&0 \\ \hline 0&0&0&0&1&4 \\ 0&0&0&1&3&6 \\ 0&0&0&2&3&4 \end{array}\right]
\begin{bmatrix} \stpco(2,2)\\ \stpco(3,2)\\ \stpco(3,3)\\ \stpco(4,2)\\ \stpco(4,3)\\ \stpco(4,4) \end{bmatrix}
\]
illustrating the block diagonal form of $\mathsf{N}$. 

\begin{example} \label{running-example:4}
Continuing our running example, the path $P_4$ has $\mathsf{H}_2=(2,1,3)$ and $\mathsf{S}_2=(2,1,0)$,
and the star $S_4$ has $\mathsf{H}_2=(0,3,3)$ and $\mathsf{S}_2=(3,1,0)$ (obtained from the calculations in Examples~\ref{running-example:2} and~\ref{running-example:3}).
For $n=4$, the matrix equation $\mathsf{M}\mathsf{H}_2=\mathsf{N}\mathsf{S}_2$ is
\[
\begin{bmatrix} 1&2&0 \\ 0&1&0 \\ 0&0&1 \end{bmatrix}
\begin{bmatrix} \hdpco(1,1)\\ \hdpco(1,2)\\ \hdpco(2,1)\end{bmatrix}
=
\begin{bmatrix} 2&0&0 \\ 0&1&3 \\ 0&2&3 \end{bmatrix}
\begin{bmatrix} \stpco(2,2)\\ \stpco(3,2)\\ \stpco(3,3)\\ \end{bmatrix}
\]
and one can check that the equation holds for both the path and the star.
\end{example}

As another remark, the explicit linear transformations mapping $\hdp{T}$ and $\stp{T}$ to each other are given by the matrices $\mathsf{M}^{-1}\mathsf{N}$ and $\mathsf{N}^{-1}\mathsf{M}$.  We do not expect the entries of either of these matrices to have combinatorially meaningful formulas, particularly for the matrix $\mathsf{N}^{-1}\mathsf{M}$, whose entries are not integers. 

\section{A recurrence for the half-generalized degree polynomial} \label{sec:recurrence}

Let $T$ be a tree with $n\geq 3$ vertices, and let $\SSS'(T)$ denote the family of subtrees of $T$ that contain at least one non-leaf vertex of $T$.
It is convenient to work with a slight modification of the half-generalized degree polynomial, defined by
\begin{equation}\label{uhdp}
\uhdp{T} = \sum_{S\in\SSS'(T)} y^{d(S)}z^{e(S)} = \hdp{T}-\ell(T)y.
\end{equation}
For instance, if $T$ is the star $S_4$, then $\hdp{T} = y^3 + 3y + 3y^2 + 3yz^2 + z^3$ (see Example~\ref{running-example:2}), so
$\uhdp{T} = y^3 + 3y^2 + 3yz^2 + z^3$.
The polynomial $\uhdp{T}$ contains the same information as $\hdp{T}$, but is more convenient to work with for our present purposes. 

Let $e=vw$ be a non-leaf edge of $T$.  The \defterm{near-contraction} of $e$ \cite[\S3]{APdMOZ} is the tree $T\odot e$ with edge set
\begin{equation} \label{near-contraction}
E(T\odot e) = E(T) \sm \big\{wx \colon x\notin\{v,w\}\big\} \cup \big\{vx \colon x\notin\{v,w\}\big\}.
\end{equation}
Equivalently, contract the edge $e$, retaining the name $v$ for the resulting vertex, and introduce a new edge $e'=vw$, so that $w$ is a leaf.  See Figure~\ref{fig:near-contraction} for an example. There is a natural bijection between edges of $T$ and edges of $T\odot e$.
\begin{figure}[th]
\begin{center}
\begin{tikzpicture}
\draw(0,0)--(4,0);
\draw[red,thick](1,0)--(2,0);
\foreach \x in {0,1,2,3,4} \draw[fill=black] (\x,0) circle (.08);
\foreach \x/\s in {0/0, 1/1, 2.7/3, 3/3, 3.3/3, 3.8/4, 4.2/4} { \draw (\x,-1)--(\s,0);  \draw[fill=black] (\x,-1) circle (.08); }
\node at (2,-2) {$T$};
\node[red] at (1.5,-.25) {$e$}; \node at (1,.3) {$v$}; \node at (2,.3) {$w$};
\begin{scope}[shift={(8,0)}]
\draw(0,0)--(3,0);
\foreach \x/\s in {0/0, .8/1,  1.7/2, 2.0/2, 2.3/2, 2.8/3, 3.2/3} { \draw (\x,-1)--(\s,0);  \draw[fill=black] (\x,-1) circle (.08); }
\draw[red,thick] (1.2,-1)--(1,0);
\draw[fill=black] (1.2,-1) circle (.08); 
\foreach \x in {0,1,2,3} \draw[fill=black] (\x,0) circle (.08);
\node at (1.5,-2) {$T\odot e$};
\node at (1,.3) {$v$}; \node at (1.2,-1.3) {$w$}; \node[red] at (1.35,-.5) {$e'$};
\end{scope}
\end{tikzpicture}
\end{center}
\caption{Near-contraction of a non-leaf edge $e$ in a tree $T$.\label{fig:near-contraction}}
\end{figure}
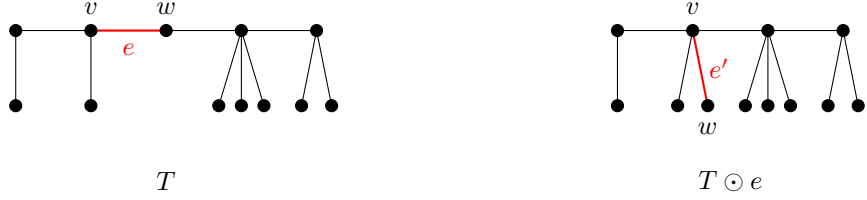

\begin{proposition} \label{prop:uhdp-recurrence}
Let $T_1,T_2$ be trees, let $v,w$ be vertices of $T_1$ and $T_2$, respectively, and let $e$ be the edge $vw$.
Let $T'_1=T_1\cup\{e\}$, $T'_2=T_2\cup\{e\}$, and $T=T_1\cup T_2\cup\{e\}$.  Then
\begin{equation*}
\uhdp{T} = \frac{y}{y+z} \left( \uhdp{T'_1} + \uhdp{T'_2} \right) + \frac{z}{y+z} \uhdp{T\odot e}.
\end{equation*}
\end{proposition}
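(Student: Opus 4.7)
The plan is to clear the denominator and prove the equivalent polynomial identity
$$(y+z)\,\uhdp{T} = y\,\uhdp{T'_1} + y\,\uhdp{T'_2} + z\,\uhdp{T\odot e}$$
by a direct monomial-by-monomial comparison. First I would partition the subtrees $S \in \SSS'(T)$ into four types according to $V(S) \cap \{v,w\}$: Type A with both $v,w \in V(S)$ (so $e$ is internal); Types B and C with exactly one of $v,w$ in $V(S)$ (so $e \in D_T(S)$ and $V(S)$ lies on one side of $e$); and Type D with $V(S)$ disjoint from $\{v,w\}$. Each subtree then decomposes as $S_1 \cup S_2$ with $S_i \subseteq T_i$ (possibly empty), and $d_T(S), e_T(S)$ admit clean formulas in terms of $d_{T_i}(S_i), e_{T_i}(S_i)$, with a $+1$ correction accounting for $e$.

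Next I would similarly split $\SSS'(T'_1), \SSS'(T'_2), \SSS'(T\odot e)$ according to the role of $v$ or $w$ in each subtree; note in particular that a subtree of $T\odot e$ with $w \in V(S^*)$ but $v \notin V(S^*)$ must be the singleton $\{w\}$, which is a leaf and so not in $\SSS'(T\odot e)$. Crucially, the non-leaves of $T'_i$ restricted to $V(T_i)$ coincide with those of $T$, and the non-leaves of $T\odot e$ on $V(T_1) \setminus \{v\}$ and $V(T_2) \setminus \{w\}$ coincide with those of $T$, so the $\SSS'$-condition is preserved under all relevant case-splits. This alignment is exactly why the recurrence is stated using $\uhdp$ rather than $\hdp$: the omission of singleton-leaf subtrees is precisely what makes the bookkeeping work out on the nose.

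Third, I would group all terms on both sides by their underlying data: (i) pairs $(S_1 \ni v,\, S_2 \ni w)$; (ii) singletons $S_1 \ni v$ of $T_1$; (iii) singletons $S_2 \ni w$ of $T_2$; (iv) subtrees $S_1$ of $T_1$ with $v \notin V(S_1)$ and containing a non-leaf of $T_1$; and (v) its $T_2$-analogue. Within each group the LHS contributes two monomials $y^{d_T(S)+1}z^{e_T(S)}$ and $y^{d_T(S)}z^{e_T(S)+1}$, and these match the two RHS monomials as follows: in group (i), both come from the two splits of $T\odot e$-subtrees containing $v$ (with or without $w$), via $z\,\uhdp{T\odot e}$; in groups (ii) and (iii), both come from the two splits of $T'_i$-subtrees containing the distinguished vertex, via $y\,\uhdp{T'_i}$; and in groups (iv)--(v), one monomial comes from the corresponding $T'_i$-subtree in $y\,\uhdp{T'_i}$ and the other from the same subtree viewed in $T\odot e$ via $z\,\uhdp{T\odot e}$.

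The main obstacle is computing $d_{T\odot e}$ and $e_{T\odot e}$ for subtrees of $T\odot e$ containing $v$ but not $w$. Here one must split the edges of $T\odot e$ into four classes — edges inherited from $T_1$, the edge $e$, the edges $vy$ corresponding to former edges $wy$ of $T_2$, and the edges of $T_2 \setminus \{w\}$ — and verify a telescoping cancellation on the $T_2$-side that yields the clean formulas $d_{T\odot e}(S^*) = d_{T_1}(S_1) + d_{T_2}(S_2) + 1$ and $e_{T\odot e}(S^*) = e_{T_1}(S_1) + e_{T_2}(S_2)$. Once these (together with the analogous and easier computations in the other cases) are in hand, the group-by-group matching is mechanical.
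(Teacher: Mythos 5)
Your proposal is correct and follows essentially the same route as the paper: both arguments partition $\SSS'(T)$ by membership of $v$ and $w$, use the natural bijections with the subtree families of $T'_1$, $T'_2$, and $T\odot e$ (including the observation that a subtree of $T\odot e$ containing $w$ but not $v$ is a forbidden singleton leaf), and track the resulting shifts in $d$ and $e$. The only difference is presentational—you clear the denominator $(y+z)$ and match monomials group by group, whereas the paper manipulates the partial sums $\uhdpsup{\pm v,\pm w}{T}$ as rational functions—but the underlying bijective content is identical.
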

Before giving the proof we need to set up notation. Given any vertex $v$ in $T$, let
\begin{align*}
\SSS'_{+v}(T) &= \{S\in\SSS'(T) \colon v\in S\}, &
\SSS'_{-v}(T) &= \{S\in\SSS'(T) \colon v\notin S\}.
\end{align*}
In addition, if $v,w$ are two distinct vertices in $T$, then define
\[\SSS'_{\pm v,\pm w}(T) = \SSS'_{\pm v}(T) \cap \SSS'_{\pm w}(T).\]
Using this notation we also define
\begin{align*}
\uhdpsup{\pm v}{T} &= \sum_{S\in\SSS'_{\pm v}(T)}y^{d(S)}z^{e(S)},
& \uhdpsup{\pm v,\pm w}{T} &= \sum_{S\in\SSS'_{\pm v,\pm w}(T)}y^{d(S)}z^{e(S)}.
\end{align*}
\begin{proof}[Proof of Proposition~\ref{prop:uhdp-recurrence}]
We start with the identity
\begin{equation} \label{eq:four_term1}
\uhdp{T} = \uhdpsup{+v,+w}{T}+\uhdpsup{+v,-w}{T}+\uhdpsup{-v,+w}{T}+\uhdpsup{-v,-w}{T},
\end{equation}
which is immediate from the definitions.

First, there are natural bijections $\SSS'_{+v,+w}(T)\to\SSS'_{+v,+w}(T\odot e)$ and $\SSS'_{+v,+w}(T\odot e)\to\SSS'_{+v,-w}(T\odot e)$, since $w$ is a leaf in $T\odot e$.  Thus, 
\[\uhdpsup{+v,+w}{T}=\uhdpsup{+v,+w}{T\odot e}=\frac{z}{y}\:\uhdpsup{+v,-w}{T\odot e}.\]
So 
\[\uhdpsup{+v}{T\odot e} = \uhdpsup{+v,+w}{T\odot e}+\uhdpsup{+v,-w}{T\odot e} = \left(1+\frac{y}{z}\right)\uhdpsup{+v,+w}{T\odot e}.\]
Combining these two equations yields
\begin{equation}
\label{eq:++}
\uhdpsup{+v,+w}{T}=\frac{z}{y+z}\uhdpsup{+v}{T\odot e}.
\end{equation}

Second, there is a bijection from $\SSS'_{-v,-w}(T)$ to $\SSS'_{-v}(T\odot e)$ since a subtree of $T\odot e$ that does not contain $v$ cannot contain the leaf $w$ (which is adjacent to $v$), so that
\begin{align}
\uhdpsup{-v,-w}{T}
&=\frac{y}{y+z}\uhdpsup{-v,-w}{T}+\frac{z}{y+z}\uhdpsup{-v,-w}{T} \notag\\
&=\frac{y}{y+z}\uhdpsup{-v,-w}{T}+\frac{z}{y+z}\uhdpsup{-v}{T\odot e}\label{eq:--}
\end{align}
where the first step is algebra and the second step uses the bijection.

Combining \eqref{eq:four_term1}, \eqref{eq:++} and \eqref{eq:--} yields
\begin{align}
\uhdp{T}&= \frac{z}{y+z}\uhdpsup{+v}{T\odot e}+\uhdpsup{+v,-w}{T}+\uhdpsup{-v,+w}{T}+\frac{y}{y+z}\uhdpsup{-v,-w}{T}+\frac{z}{y+z}\uhdpsup{-v}{T\odot e}\notag\\
&= \uhdpsup{+v,-w}{T}+\uhdpsup{-v,+w}{T}+\frac{y}{y+z}\uhdpsup{-v,-w}{T}+\frac{z}{y+z}\uhdp{T\odot e}.\label{eq:almost-there}
\end{align}
It remains to show that
\begin{equation} \label{eq:last-step}
\uhdpsup{+v,-w}{T}+\uhdpsup{-v,+w}{T}+\frac{y}{y+z}\uhdpsup{-v,-w}{T}
=\frac{y}{y+z} \left( \uhdp{T_1'} + \uhdp{T_2'} \right)
\end{equation}
which when combined with~\eqref{eq:almost-there} will give the desired result.

First, there is a bijection $\phi:\SSS'_{+v,-w}(T)\to\SSS'_{+v}(T_1)$; observe that $\phi(S)$ has one fewer external edge than $S$. Thus, 
\[\uhdpsup{+v,-w}{T}=y\uhdpsup{+v}{T_1}=\frac{y}{y+z}\uhdpsup{+v}{T_1'},\]
and similarly,
\[\uhdpsup{-v,+w}{T}=y\uhdpsup{+w}{T_2}=\frac{y}{y+z}\uhdpsup{+w}{T_2'}.\]
Finally, each subtree of $T$ containing neither $v$ nor $w$ is either a subtree of $T_1'$ that does not contain $v$, or a subtree of $T_2'$ that does not contain $w$. Hence,
\[\uhdpsup{-v,-w}{T} =\uhdpsup{-v}{T_1'}+\uhdpsup{-w}{T_2'}.\]
Combining the last three equations yields~\eqref{eq:last-step}, completing the proof.
\end{proof}
Proposition~\ref{prop:uhdp-recurrence} can be used to show that two non-isomorphic trees have the same (modified) HDP.
See Figure~\ref{fig:recurrence} for an example. We will exploit this idea further in the next section.
\begin{figure}[th]
\begin{align*}
\uhdpop\left(
\begin{tikzpicture}[scale=0.7,baseline=-12pt]
\draw(0,0)--(4,0);
\draw[red,thick](1,0)--(2,0);
\foreach \x in {0,1,2,3,4} \draw[fill=black] (\x,0) circle (.08);
\foreach \x/\s in {0/0, 1/1, 2.8/3, 3.2/3, 3.8/4, 4.2/4} { \draw (\x,-1)--(\s,0);  \draw[fill=black] (\x,-1) circle (.08); }
\end{tikzpicture}\right)
&=
\frac{y}{y+z}\uhdpop\left(
\begin{tikzpicture}[scale=0.7,baseline=-12pt]
\draw(0,0)--(2,0);
\draw[red,thick](1,0)--(2,0);
\foreach \x in {0,1,2} \draw[fill=black] (\x,0) circle (.08);
\foreach \x/\s in {0/0, 1/1} { \draw (\x,-1)--(\s,0);  \draw[fill=black] (\x,-1) circle (.08); }
\end{tikzpicture}\right)+
\frac{y}{y+z}\uhdpop\left(
\begin{tikzpicture}[scale=0.7,baseline=-12pt]
\draw(1,0)--(4,0);
\draw[red,thick](1,0)--(2,0);
\foreach \x in {1,2,3,4} \draw[fill=black] (\x,0) circle (.08);
\foreach \x/\s in {2.8/3, 3.2/3, 3.8/4, 4.2/4} { \draw (\x,-1)--(\s,0);  \draw[fill=black] (\x,-1) circle (.08); }
\end{tikzpicture}\right)\\
&+
\frac{z}{y+z}\uhdpop\left(
\begin{tikzpicture}[scale=0.7,baseline=-12pt]
\draw(0,0)--(3,0);
\foreach \x/\s in {0/0, .8/1,  1.8/2, 2.2/2, 2.8/3, 3.2/3} { \draw (\x,-1)--(\s,0);  \draw[fill=black] (\x,-1) circle (.08); }
\draw[red,thick] (1.2,-1)--(1,0);
\draw[fill=black] (1.2,-1) circle (.08); 
\foreach \x in {0,1,2,3} \draw[fill=black] (\x,0) circle (.08);
\end{tikzpicture}
\right)
\end{align*}
\begin{align*}
\uhdpop\left(
\begin{tikzpicture}[scale=0.7,baseline=-12pt]
\draw(0,0)--(4,0);
\draw[red,thick](2,0)--(3,0);
\foreach \x in {0,1,2,3,4} \draw[fill=black] (\x,0) circle (.08);
\foreach \x/\s in {0/0, 0.8/1, 1.2/1, 2/2, 3.8/4, 4.2/4} { \draw (\x,-1)--(\s,0);  \draw[fill=black] (\x,-1) circle (.08); }
\end{tikzpicture}\right)
&=
\frac{y}{y+z}\uhdpop\left(
\begin{tikzpicture}[scale=0.7,baseline=-12pt]
\draw(0,0)--(2,0);
\draw[red,thick](2,0)--(3,0);
\foreach \x in {0,1,2,3} \draw[fill=black] (\x,0) circle (.08);
\foreach \x/\s in {0/0, 0.8/1, 1.2/1, 2/2} { \draw (\x,-1)--(\s,0);  \draw[fill=black] (\x,-1) circle (.08); }
\end{tikzpicture}\right)+
\frac{y}{y+z}\uhdpop\left(
\begin{tikzpicture}[scale=0.7,baseline=-12pt]
\draw(1,0)--(3,0);
\draw[red,thick](1,0)--(2,0);
\foreach \x in {1,2,3} \draw[fill=black] (\x,0) circle (.08);
\foreach \x/\s in {2.8/3, 3.2/3} { \draw (\x,-1)--(\s,0);  \draw[fill=black] (\x,-1) circle (.08); }
\end{tikzpicture}\right)\\
&+
\frac{z}{y+z}\uhdpop\left(
\begin{tikzpicture}[scale=0.7,baseline=-12pt]
\draw(0,0)--(3,0);
\foreach \x/\s in {0/0, .8/1,  1.8/2, 1.2/1, 2.8/3, 3.2/3} { \draw (\x,-1)--(\s,0);  \draw[fill=black] (\x,-1) circle (.08); }
\draw[red,thick] (2.2,-1)--(2,0);
\draw[fill=black] (2.2,-1) circle (.08); 
\foreach \x in {0,1,2,3} \draw[fill=black] (\x,0) circle (.08);
\end{tikzpicture}
\right)
\end{align*}
\caption{Application of Proposition \ref{prop:uhdp-recurrence}. Note that the right-hand sides of both equations are equal, proving that the two trees on the left-hand sides have the same HDP.
\label{fig:recurrence}}
\end{figure}

As a remark, we have not been able to obtain recurrences for $\gdp{T}$ similar to those for $\hdp{T}$.

\section{Families of trees with the same half-generalized degree polynomial} \label{sec:same-HDP}
In this section we will construct arbitrarily large families of non-isomorphic trees with the same half-generalized degree polynomial by exploiting the recurrence given by Proposition \ref{prop:uhdp-recurrence}.

Recall that a \defterm{composition} of an integer $n$ is an ordered list of positive integers $\alpha=(a_1,\dots,a_k)$ that add up to $n$.   The \defterm{length} of~$\alpha$ is $\ell(\alpha)=k$. The \defterm{reverse}  of~$\alpha$ is $\alpha^*:=(\alpha_k,\alpha_{k-1},\dots,a_1)$. If $\alpha=(a_1,\dots,a_k)$ and $\beta=(b_1,\dots,b_m)$ with $|\alpha|=|\beta|$, we say that $\beta$ is a \defterm{coarsening} of~$\alpha$, written $\beta\geq\alpha$, if every partial sum of $\beta$ (i.e., every number $b_1+\cdots+b_j$ for some $j$) is also a partial sum of $\alpha$.  Coarsening is a partial order on compositions of $n$.
The \defterm{concatenation} of $\alpha$ and $\beta$ is
\[\alpha\cdot\beta = (a_1,\dots, a_k,b_1,\dots,b_m),\]
and the \defterm{near-concatenation} is
\[\alpha\odot\beta = (a_1,\dots, a_{k-1},a_k+b_1,b_2,\dots,b_m).\]

A \defterm{caterpillar} is a tree with the property that deleting all its leaves produces a path $\spine(T)$, called the \defterm{spine} of the caterpillar.
We write $\Cat(a_1,\dots,a_k)$ for the caterpillar with $k$ spine vertices $v_1,\dots,v_k$, adjacent to $a_1-1,\dots,a_k-1$ leaves respectively, so that the total number of vertices is $n=a_1+\cdots+a_k$.  (See Figure~\ref{fig:cats} for an example.)
The composition $\alpha=(a_1,\dots,a_k)$ is called the \defterm{signature} of the caterpillar.
The signature is well-defined, and a complete invariant, up to reversal.  Moreover, $a_1,a_k\geq 2$ since each end vertex of the spine is adjacent to at least one leaf; the other numbers $a_i$ are unconstrained. Let $\mathcal{C}$ denote the set of compositions where the first and last parts are larger than $1$, i.e., compositions that are signatures of caterpillars.
\begin{figure}[!th]
\begin{center}
\begin{tikzpicture}
\foreach \x in {0,...,6} \draw[fill=black] (\x,0) circle (.08);
\draw(0,0)--(6,0);
\foreach \x/\s in {0/0, 1/1, 2.7/3, 3/3, 3.3/3, 3.8/4, 4.2/4, 5.5/6, 5.75/6, 6/6, 6.25/6, 6.5/6} { \draw (\x,-1)--(\s,0);  \draw[fill=black] (\x,-1) circle (.08); }
\end{tikzpicture}
\end{center}
\caption{The caterpillar $\Cat(2,2,1,4,3,1,6)$.\label{fig:cats}}
\end{figure}
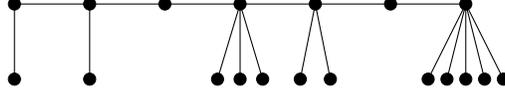

Observe that deleting the edge $e=v_k v_{k+1}$ from $\Cat(\alpha\cdot\beta)$ produces the forest with components $\Cat(\alpha)$ and $\Cat(\beta)$.  Moreover, $\Cat(\alpha\odot\beta)$ is just the near-contraction $\Cat(\alpha\cdot\beta)\odot e$ (see~\eqref{near-contraction}).

For convenience, given a composition $\alpha$, define
\[\Hone(\alpha)=\uhdp{\Cat(1\odot \alpha\odot 1)}.\]
In this notation, Proposition~\ref{prop:uhdp-recurrence} becomes
\begin{equation}
\label{eq:recurrence_h}
\Hone(\alpha\cdot\beta) = \frac{y}{y+z}(\Hone(\alpha)+\Hone(\beta))
+\frac{z}{y+z} \Hone(\alpha\odot\beta).
\end{equation}
Define
\begin{equation} \label{define-Htwo}
\Htwo(\alpha)=
\Htwo(\alpha)(y,z,x_1,x_2,\dots)=
\sum_{\gamma\geq \alpha} \frac{{y}^{\ell(\gamma)-1}z^{\ell(\alpha)-\ell(\gamma)}}{(y+z)^{\ell(\alpha)-1}}\sum_{i=1}^{\ell(\gamma)}x_{\gamma_i}.
\end{equation}
Our next step is to show that the power series $\Htwo(\alpha)$ satisfies the same recurrence as $\Hone(\alpha)$, which will enable us to obtain a closed form for the HDP of a caterpillar using the right-hand side of~\eqref{define-Htwo}.

\begin{proposition} \label{prop:Htwo-recurrence}
For all compositions $\alpha$ and $\beta$,
\begin{equation} \label{eq:recurrence-H}
\Htwo(\alpha\cdot\beta)=\frac{y}{y+z}\left(\Htwo(\alpha)+\Htwo(\beta)\right)+\frac{z}{y+z}\Htwo(\alpha\odot\beta).
\end{equation}
\end{proposition}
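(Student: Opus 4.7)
My plan is to prove the identity by directly expanding $\Htwo(\alpha\cdot\beta)$ from its definition \eqref{define-Htwo} and splitting the sum over coarsenings into two pieces, matched respectively to the two terms on the right-hand side of \eqref{eq:recurrence-H}.

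The key structural observation is a bijection between coarsenings. Given $\alpha=(a_1,\dots,a_k)$ and $\beta=(b_1,\dots,b_m)$, the partial sums of $\alpha\cdot\beta$ are precisely the partial sums of $\alpha\odot\beta$ together with the extra value $|\alpha|=a_1+\cdots+a_k$. Hence every $\gamma\geq\alpha\cdot\beta$ falls into exactly one of two classes:
\begin{itemize}
\item Class A: $|\alpha|$ is a partial sum of $\gamma$. Such $\gamma$ are in bijection with ordered pairs $(\gamma_1,\gamma_2)$ with $\gamma_1\geq\alpha$ and $\gamma_2\geq\beta$, via $\gamma=\gamma_1\cdot\gamma_2$; note $\ell(\gamma)=\ell(\gamma_1)+\ell(\gamma_2)$ and $\sum_i x_{\gamma_i}=\sum_i x_{(\gamma_1)_i}+\sum_i x_{(\gamma_2)_i}$.
\item Class B: $|\alpha|$ is not a partial sum of $\gamma$. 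These $\gamma$ are exactly the coarsenings of $\alpha\odot\beta$.
\end{itemize}

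The Class B contribution handles itself: using $\ell(\alpha\cdot\beta)=\ell(\alpha\odot\beta)+1$, one pulls out a factor of $z/(y+z)$, and what remains is exactly the series $\Htwo(\alpha\odot\beta)$.

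For Class A, I would factor $(y+z)^{\ell(\alpha\cdot\beta)-1}=(y+z)\cdot(y+z)^{\ell(\alpha)-1}(y+z)^{\ell(\beta)-1}$, write $y^{\ell(\gamma)-1}=y\cdot y^{\ell(\gamma_1)-1}\cdot y^{\ell(\gamma_2)-1}$ and $z^{\ell(\alpha\cdot\beta)-\ell(\gamma)}=z^{\ell(\alpha)-\ell(\gamma_1)}\cdot z^{\ell(\beta)-\ell(\gamma_2)}$, and then split the sum according to whether $\sum_i x_{(\gamma_1)_i}$ or $\sum_i x_{(\gamma_2)_i}$ is being contributed. The term carrying the $x_{(\gamma_1)_i}$ then factors as
\[
\frac{y}{y+z}\Htwo(\alpha)\cdot\underbrace{\Biggl(\sum_{\gamma_2\geq\beta}\frac{y^{\ell(\gamma_2)-1}z^{\ell(\beta)-\ell(\gamma_2)}}{(y+z)^{\ell(\beta)-1}}\Biggr)}_{=:\,W(\beta)},
\]
and the other term factors symmetrically as $\frac{y}{y+z}W(\alpha)\,\Htwo(\beta)$.

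The one small lemma needed is $W(\delta)=1$ for every composition $\delta$. This is immediate: coarsenings of a length-$\ell$ composition are parameterized by subsets of the $\ell-1$ internal boundaries, and a coarsening formed by keeping $j$ boundaries has length $j+1$, so
\[
W(\delta)=\sum_{j=0}^{\ell(\delta)-1}\binom{\ell(\delta)-1}{j}\frac{y^j z^{\ell(\delta)-1-j}}{(y+z)^{\ell(\delta)-1}}=1
\]
by the binomial theorem. Combining Class A and Class B yields \eqref{eq:recurrence-H}. The only genuine content is the bijective splitting of coarsenings; the remaining manipulations are algebraic bookkeeping, which I expect to be the most error-prone step and would carry out by tracking exponents of $y$, $z$, and $(y+z)$ carefully.
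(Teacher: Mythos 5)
Your proposal is correct and follows essentially the same route as the paper: both split the coarsenings of $\alpha\cdot\beta$ according to whether the boundary between $\alpha$ and $\beta$ survives (your Class B is the paper's merged case yielding $\tfrac{z}{y+z}\Htwo(\alpha\odot\beta)$, and your Class A is the paper's concatenation case), and both rely on the same normalization identity $\sum_{\gamma\geq\delta} y^{\ell(\gamma)-1}z^{\ell(\delta)-\ell(\gamma)}=(y+z)^{\ell(\delta)-1}$, proved via the binomial theorem over subsets of internal boundaries. Your exponent bookkeeping checks out, so no changes are needed.
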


\begin{proof}
In each coarsening $\gamma\geq\alpha\cdot\beta$, the last part of $\alpha$ and the first part of $\beta$ are either merged or kept separate.  In the first case, $\gamma$ coarsens $\alpha\odot\beta$.  In the second, $\gamma$ is the concatenation of a coarsening of $\alpha$ with a coarsening of $\beta$.  Therefore, we may split up the expression for $\Htwo(\alpha\cdot\beta)$ given by~\eqref{define-Htwo} as
\begin{equation} \label{eq:S}
\Htwo(\alpha\cdot \beta)=
\underbrace{
\sum_{\gamma\geq\alpha\odot\beta}
\frac{y^{\ell(\gamma)-1}z^{\ell(\alpha\cdot\beta)-\ell(\gamma)}}{(y+z)^{\ell(\alpha\cdot\beta)-1}} 
\sum_{i=1}^{\ell(\gamma)} x_{\gamma_i}}_A
~+~
\underbrace{\sum_{\gamma'\geq\alpha}\sum_{\gamma''\geq \beta}
\frac{y^{\ell(\gamma'\cdot\gamma'')-1}z^{\ell(\alpha\cdot\beta)-\ell(\gamma'\cdot\gamma'')}}{(y+z)^{\ell(\alpha\cdot\beta)-1}} 
\sum_{i=1}^{\ell(\gamma'\cdot\gamma'')} x_{(\gamma'\cdot\gamma'')_i}}_B.
\end{equation}
First, observe that
\begin{align}
A&=\frac{z}{y+z}\sum_{\gamma\geq\alpha\odot\beta} \frac{y^{\ell(\gamma)-1}z^{\ell(\alpha\odot\beta)-\ell(\gamma)}}{(y+z)^{\ell(\alpha\odot\beta)-1}} 
\sum_{i=1}^{\ell(\gamma)} x_{\gamma_i}=\frac{z}{y+z}\Htwo(\alpha\odot\beta).\label{eq:A}
\end{align}

Second,
\begin{align}
B&= 
\frac{y}{y+z}\sum_{\gamma'\geq\alpha}\sum_{\gamma''\geq \beta}
\frac{y^{\ell(\gamma'\cdot\gamma'')-2}z^{\ell(\alpha\cdot\beta)-\ell(\gamma'\cdot\gamma'')}}{(y+z)^{\ell(\alpha\cdot\beta)-2}} 
\sum_{i=1}^{\ell(\gamma'\cdot\gamma'')} x_{(\gamma'\cdot\gamma'')_i}
\notag\\
&=\frac{y}{y+z}\sum_{\gamma'\geq\alpha}
\sum_{\gamma''\geq \beta}
\frac{y^{\ell(\gamma')-1}z^{\ell(\alpha)-\ell(\gamma')}}
{(y+z)^{\ell(\alpha)-1}}
\frac{y^{\ell(\gamma'')-1}z^{\ell(\beta)-\ell(\gamma'')}}{(y+z)^{\ell(\beta)-1}} 
\left(\sum_{i=1}^{\ell(\gamma')} x_{\gamma'_i}+\sum_{i=1}^{\ell(\gamma'')} x_{\gamma''_i}\right)
\notag\\
&= 
\frac{y}{y+z} \left(
\left(\sum_{\gamma''\geq \beta}
\frac{y^{\ell(\gamma'')-1}z^{\ell(\beta)-\ell(\gamma'')}}{(y+z)^{\ell(\beta)-1}}
\right)
\Htwo(\alpha)
+
\left(
\sum_{\gamma'\geq \alpha}
\frac{y^{\ell(\gamma')-1}z^{\ell(\alpha)-\ell(\gamma')}}
{(y+z)^{\ell(\alpha)-1}}\right)
\Htwo(\beta)
\right).\label{eq:B}
\end{align}
On the other hand,
\begin{equation} \label{eq:numer}
\sum_{\gamma'\geq\alpha} y^{\ell(\gamma')-1}z^{\ell(\alpha)-\ell(\gamma')} = \sum_{k=1}^{\ell(\alpha)} \binom{\ell(\alpha)-1}{k-1} y^{k-1}z^{\ell(\alpha)-k}= (y+z)^{\ell(\alpha)-1},
\end{equation}
so the parenthesized sums in~\eqref{eq:B} may be dropped, and then substituting~\eqref{eq:B} and~\eqref{eq:A} into~\eqref{eq:S} yields the desired equality.
\end{proof}

\begin{proposition} \label{prop:Hbar-from-Htwo}
For every composition $\alpha$, we have
\begin{align}
\uhdp{\Cat(1\odot\alpha\odot1)}
&= \Htwo(\alpha)(y,z,x_1,x_2,\dots)|_{x_1=(y+z)^{2},\ x_2=(y+z)^{3},\ \ldots,\ x_i=(y+z)^{i+1},\ \ldots} \notag\\
&= \sum_{\gamma\geq\alpha} \frac{y^{\ell(\gamma)-1}z^{\ell(\alpha)-\ell(\gamma)}}{(y+z)^{\ell(\alpha)-1}} 
\sum_{i=1}^{\ell(\gamma)} (y+z)^{\gamma_i+1}.
\label{hdp-closed}
\end{align}
\end{proposition}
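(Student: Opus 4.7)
The plan is to prove \eqref{hdp-closed} by induction on the length $\ell(\alpha)$, exploiting the fact that both sides of the identity satisfy the same recurrence with the same base case. Set $F(\alpha) := \Htwo(\alpha)(y,z,x_1,x_2,\dots)\big|_{x_i=(y+z)^{i+1}}$; the goal is to show $\Hone(\alpha)=F(\alpha)$.

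For the base case $\ell(\alpha)=1$, write $\alpha=(n)$. Then $\Cat(1\odot\alpha\odot 1)=\Cat(n+2)$ is the star with a single non-leaf vertex $v$ incident to $n+1$ leaves. Every subtree in $\SSS'(\Cat(n+2))$ consists of $v$ together with some subset of the $n+1$ leaves; a subtree containing $k$ leaves has $e(S)=k$ and $d(S)=n+1-k$, so
\[
\Hone((n))=\uhdp{\Cat(n+2)}=\sum_{k=0}^{n+1}\binom{n+1}{k}y^{n+1-k}z^{k}=(y+z)^{n+1}.
\]
On the other side, $(n)$ is its own unique coarsening, so $\Htwo((n))=x_n$, and the specialization gives $F((n))=(y+z)^{n+1}$. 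Hence the identity holds at length one.

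For the inductive step, I will show that $\Hone$ satisfies the recurrence \eqref{eq:recurrence_h}. Given a splitting $\gamma=\alpha\cdot\beta$ with $\alpha,\beta$ nonempty, apply Proposition~\ref{prop:uhdp-recurrence} to $T_1=\Cat(1\odot\alpha)$ and $T_2=\Cat(\beta\odot 1)$ with $e$ the edge joining the last spine vertex of $T_1$ to the first spine vertex of $T_2$. Directly reading off signatures (and using associativity of $\odot$) identifies
\[
T=\Cat(1\odot\alpha\cdot\beta\odot 1),\quad T_1'=\Cat(1\odot\alpha\odot 1),\quad T_2'=\Cat(1\odot\beta\odot 1),\quad T\odot e=\Cat(1\odot\alpha\odot\beta\odot 1),
\]
from which Proposition~\ref{prop:uhdp-recurrence} yields exactly \eqref{eq:recurrence_h} for $\Hone$. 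On the other side, Proposition~\ref{prop:Htwo-recurrence} says $\Htwo$ satisfies the same recurrence in the ambient ring $\mathbb{Q}(y,z)[x_1,x_2,\dots]$; since the coefficients $\tfrac{y}{y+z}$ and $\tfrac{z}{y+z}$ depend only on $y,z$, the specialization $x_i\mapsto(y+z)^{i+1}$ preserves it, so $F$ satisfies \eqref{eq:recurrence_h} as well.

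Given $\alpha$ with $\ell(\alpha)\geq 2$, split $\alpha=\alpha'\cdot\alpha''$ with $\alpha'=(\alpha_1)$ and $\alpha''=(\alpha_2,\dots,\alpha_{\ell(\alpha)})$. Then $\ell(\alpha'),\ \ell(\alpha''),\ \ell(\alpha'\odot\alpha'')$ are all strictly less than $\ell(\alpha)$, so by the induction hypothesis $\Hone=F$ on each of these three compositions, and the common recurrence forces $\Hone(\alpha)=F(\alpha)$. The main obstacle is not conceptual but bookkeeping: one must verify carefully that the four caterpillars arising from Proposition~\ref{prop:uhdp-recurrence} have exactly the signatures claimed above, which amounts to checking that the outer $1\odot(\cdot)\odot 1$ padding interacts correctly with the splitting $\alpha\cdot\beta$ and with near-contraction along a middle spine edge.
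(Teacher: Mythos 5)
Your proof is correct and follows essentially the same route as the paper: induction on $\ell(\alpha)$, with the star $\Cat(a+2)$ as the base case and the inductive step supplied by matching the recurrence of Proposition~\ref{prop:uhdp-recurrence} (in the form~\eqref{eq:recurrence_h}) against Proposition~\ref{prop:Htwo-recurrence} under the specialization $x_i\mapsto(y+z)^{i+1}$. The only difference is that you spell out the signature bookkeeping identifying $T$, $T_1'$, $T_2'$, $T\odot e$ with the appropriate caterpillars, which the paper treats as immediate from its earlier remarks on concatenation and near-contraction of caterpillars.
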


\begin{proof}
We induct on $\ell(\alpha)$. For the base case, say $\alpha=(a)$.  Then $1\odot\alpha\odot1=(a+2)$; that is, $\Cat(a+2)$ is the star with $a+1$ edges.  So
\[\Hone(\alpha) =\uhdp{\Cat(a+2)}=\frac{1}{(y+z)^{-1}} (y+z)^a = (y+z)^{a+1},\]
which is the right-hand side of~\eqref{hdp-closed}.
Meanwhile, the inductive step follows directly from \eqref{eq:recurrence_h}, \eqref{eq:recurrence-H} and the induction hypothesis.
\end{proof} 

The preceding machinery can be used to construct caterpillars with the same HDP. The procedure follows the construction of compositions with the same ribbon Schur function \cite{BTvW} or $\mathcal{L}$-polynomial \cite{APZ}. Given two compositions $\alpha=(a_1,\ldots,a_k)$ and $\beta$, define 
\[\alpha\circ \beta = \beta^{\odot a_1}\cdot \beta^{\odot a_2}\cdots\beta^{\odot a_k}\]
where $\beta^{\odot i}=\beta\odot\cdots\odot\beta$ ($i$ times).
This operation satisfies the identities
\begin{align}
(\alpha\cdot\gamma)\circ\beta &= (\alpha\circ\beta)\cdot(\gamma\circ\beta),\label{circ:1}\\
(\alpha\odot\gamma)\circ\beta &= (\alpha\circ\beta)\odot(\gamma\circ\beta),\label{circ:2}\\
(\alpha\circ\beta)^* &= \alpha^*\circ\beta^*.\label{circ:reverse}
\end{align}

\begin{proposition}
\label{prop-composition-caterpillars}
For all compositions $\alpha$ and $\beta$, we have 
\[\Htwo(\alpha\circ\beta) = \Htwo(\alpha)\big\vert_{x_i=\Htwo(\beta^{\odot i})}.\]
\end{proposition}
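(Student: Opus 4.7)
The plan is to prove the identity by induction on $\ell(\alpha)$, using the recurrence from Proposition~\ref{prop:Htwo-recurrence} together with the compatibility identities~\eqref{circ:1} and~\eqref{circ:2}.

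For the base case $\ell(\alpha)=1$, write $\alpha=(a)$. Then $\alpha\circ\beta=\beta^{\odot a}$ by the definition of $\circ$, while $\Htwo((a))=x_a$ since the only coarsening of a length-one composition is itself. Substituting $x_a\mapsto\Htwo(\beta^{\odot a})$ on the right-hand side therefore matches the left-hand side immediately.

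For the inductive step, split $\alpha=\alpha_1\cdot\alpha_2$ with both $\alpha_i$ of strictly smaller length than $\alpha$. Apply Proposition~\ref{prop:Htwo-recurrence} once to expand
\[
\Htwo(\alpha) = \tfrac{y}{y+z}\bigl(\Htwo(\alpha_1)+\Htwo(\alpha_2)\bigr) + \tfrac{z}{y+z}\Htwo(\alpha_1\odot\alpha_2).
\]
Separately, use~\eqref{circ:1} to write $\alpha\circ\beta=(\alpha_1\circ\beta)\cdot(\alpha_2\circ\beta)$, and apply the same recurrence to get
\[
\Htwo(\alpha\circ\beta) = \tfrac{y}{y+z}\bigl(\Htwo(\alpha_1\circ\beta)+\Htwo(\alpha_2\circ\beta)\bigr) + \tfrac{z}{y+z}\Htwo\bigl((\alpha_1\circ\beta)\odot(\alpha_2\circ\beta)\bigr).
\]
The third term rewrites via~\eqref{circ:2} as $\Htwo((\alpha_1\odot\alpha_2)\circ\beta)$. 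Since $\ell(\alpha_1),\ell(\alpha_2)<\ell(\alpha)$ and $\ell(\alpha_1\odot\alpha_2)=\ell(\alpha)-1<\ell(\alpha)$, the inductive hypothesis applies to all three summands, turning each $\Htwo(\alpha_i\circ\beta)$ and $\Htwo((\alpha_1\odot\alpha_2)\circ\beta)$ into the corresponding $\Htwo(\alpha_i)$ and $\Htwo(\alpha_1\odot\alpha_2)$ with $x_i$ replaced by $\Htwo(\beta^{\odot i})$. Comparing the two expansions yields the desired equality.

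The only point requiring verification is that the substitution $x_i\mapsto\Htwo(\beta^{\odot i})$ commutes with the recurrence operation, but since the coefficients $\tfrac{y}{y+z},\tfrac{z}{y+z}$ depend only on $y,z$ and not on any $x_i$, this commutativity is automatic, so the substitution passes through the recurrence termwise. Thus there is no real obstacle; the argument is a clean structural induction where the two identities~\eqref{circ:1} and~\eqref{circ:2} are precisely what is needed to align the decompositions of $\alpha$ and $\alpha\circ\beta$ under the recurrence.
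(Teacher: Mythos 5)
Your proof is correct and follows essentially the same route as the paper's: induction on $\ell(\alpha)$ with base case $\Htwo((a))=x_a$, then expanding both $\Htwo(\alpha)$ and $\Htwo(\alpha\circ\beta)$ via the recurrence of Proposition~\ref{prop:Htwo-recurrence} and aligning the terms using~\eqref{circ:1} and~\eqref{circ:2}. The only cosmetic difference is that the paper peels off the last part $a_k$ rather than splitting $\alpha$ arbitrarily, and your explicit remark that the substitution $x_i\mapsto\Htwo(\beta^{\odot i})$ commutes with the recurrence (since its coefficients involve only $y,z$) is a point the paper leaves implicit.
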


\begin{proof}
We induct on $k=\ell(\alpha)$. For the base case, suppose $\alpha=(a)$, so by definition
$\Htwo(\alpha\circ\beta)=\Htwo(\beta^{\odot a})$ and $\Htwo(a)=x_a$. Thus $\Htwo(a)|_{x_i=\Htwo(\beta^{\odot i})} = \Htwo(\beta^{\odot a})$ as desired.  For the inductive step, let $\alpha'=(a_1,\dots,a_{k-1})$, so that
\begin{align*}
\Htwo(\alpha\circ\beta)
&=\Htwo((\alpha'\circ\beta)\cdot (a_k\circ\beta))\\
&=\frac{y}{y+z}\left(\Htwo(\alpha'\circ\beta)+
\Htwo(a_k\circ\beta)\right)+
\frac{z}{y+z}\Htwo((\alpha'\odot a_k)\circ\beta)\\
&=\frac{y}{y+z}\left(\Htwo(\alpha')+
\Htwo( a_k)\right)|_{x_i=\Htwo(\beta^{\odot i})}+
\frac{z}{y+z}\Htwo(\alpha'\odot a_k)|_{x_i=\Htwo(\beta^{\odot i})}\\
&=\Htwo(\alpha)|_{x_i=\Htwo(\beta^{\odot i})}.\qedhere
\end{align*}
\end{proof}

A \defterm{factorization} of a composition $\alpha$ is an equality
$\alpha=\alpha_1\circ\alpha_2\circ\cdots\circ\alpha_k$. 
The factorization is \defterm{nontrivial} if (i) no $\alpha_i$ is the composition 1; (ii) no two consecutive factors both have length 1; and (iii)
no two consecutive factors both have all parts equal to 1.
The factorization is \defterm{irreducible} if it is nontrivial and no $\alpha_i$ admits a nontrivial factorization.  In fact every composition admits a unique irreducible factorization~\cite[Theorem 3.6]{BTvW}.

Let $\alpha$ be a composition with irreducible factorization $\alpha_1\circ\cdots\circ\alpha_k$.  A \defterm{switch} of $\alpha$ is a composition $\beta$ of the form $\beta_1\circ\cdots\circ\beta_k$, where $\beta_i\in\{\alpha_i,\alpha_i^*\}$ for each $i$.  Unique factorization implies that switching is an equivalence relation on compositions.

\begin{theorem} \label{thm:equiv-class}
\begin{enumerate}
\item Suppose that $\alpha$ and $\beta$ are related by switching.  Then $\Htwo(\alpha)=\Htwo(\beta)$, and consequently
\[\hdp{\Cat(1\odot\alpha\odot 1)} = \hdp{\Cat(1\odot\beta\odot 1)}.\]
\item In particular, if $\alpha$ has $q$ irreducible factors that are not palindromes, then the $\Htwo$-equivalence class of $\alpha$ contains at least $2^{q-1}$ non-equivalent compositions.  In this case, $\Cat(1\odot\alpha\odot1)$ is one of at least $2^{q-1}$ pairwise non-isomorphic caterpillars with the same half-generalized degree polynomial (equivalently, by Theorem~\ref{thm:sub-hdp}, with the same subtree polynomial).
\end{enumerate}
In particular, there exist arbitrarily large sets of non-isomorphic caterpillars with the same HDP and STP.
\end{theorem}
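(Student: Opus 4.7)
The plan is to prove that $\Htwo(\alpha)$ is invariant under switching, then apply Proposition~\ref{prop:Hbar-from-Htwo} to transfer this invariance to $\uhdp{\Cat(1\odot\alpha\odot 1)}$, and finally use unique irreducible factorization to count the resulting caterpillars.

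I would start with two preliminary identities for $\Htwo$. From the defining formula~\eqref{define-Htwo}, the bijection $\gamma \leftrightarrow \gamma^*$ between coarsenings of $\alpha$ and coarsenings of $\alpha^*$ preserves both $\ell(\gamma)$ and the multiset of parts $\{\gamma_i\}$, so $\Htwo(\alpha) = \Htwo(\alpha^*)$. A direct check from the definition of $\odot$ shows that $(\alpha^{\odot i})^* = (\alpha^*)^{\odot i}$, whence $\Htwo(\alpha^{\odot i}) = \Htwo((\alpha^*)^{\odot i})$ for every $i$. I would also observe that identities~\eqref{circ:1} and~\eqref{circ:2} imply both the associativity of $\circ$ and the auxiliary identity $(\gamma \circ \delta)^{\odot i} = \gamma^{\odot i} \circ \delta$.

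Next I would show that replacing any single irreducible factor $\alpha_j$ by its reverse preserves $\Htwo$. Writing $A = \alpha_1 \circ \cdots \circ \alpha_{j-1}$ and $B = \alpha_{j+1} \circ \cdots \circ \alpha_k$ (with an empty factor omitted when $j = 1$ or $j = k$), Proposition~\ref{prop-composition-caterpillars} applied to $A \circ (\alpha_j \circ B)$ together with the auxiliary identity gives
\[
\Htwo(A \circ \alpha_j \circ B) \;=\; \Htwo(A)\big\vert_{x_i \,=\, \Htwo(\alpha_j^{\odot i} \circ B)}.
\]
A second application of Proposition~\ref{prop-composition-caterpillars} yields $\Htwo(\alpha_j^{\odot i} \circ B) = \Htwo(\alpha_j^{\odot i})\big\vert_{x_l = \Htwo(B^{\odot l})}$, and by the previous paragraph this equals $\Htwo((\alpha_j^*)^{\odot i})\big\vert_{x_l = \Htwo(B^{\odot l})} = \Htwo((\alpha_j^* \circ B)^{\odot i})$. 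Substituting back, $\Htwo(A \circ \alpha_j \circ B) = \Htwo(A \circ \alpha_j^* \circ B)$; iterating over $j$ then shows $\Htwo(\beta) = \Htwo(\alpha)$ for every switch $\beta$ of $\alpha$, and specializing $x_i = (y+z)^{i+1}$ via Proposition~\ref{prop:Hbar-from-Htwo} proves part~(1).

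For part~(2), the nontriviality and irreducibility conditions of~\cite[Theorem~3.6]{BTvW} are each symmetric under reversing a single factor, so every switch $\beta = \beta_1 \circ \cdots \circ \beta_k$ with $\beta_i \in \{\alpha_i, \alpha_i^*\}$ is itself the unique irreducible factorization of $\beta$; hence the $2^q$ choices yield $2^q$ \emph{distinct} compositions. The caterpillar $\Cat(1\odot\beta\odot 1)$ depends on $\beta$ only up to reversal, and iterating~\eqref{circ:reverse} gives $\beta^* = \beta_1^* \circ \cdots \circ \beta_k^*$; the involution $\beta \leftrightarrow \beta^*$ has no fixed point in the switch class (some non-palindrome factor $\alpha_{i_0}$ forces $\beta_{i_0} \neq \beta_{i_0}^*$), so the $2^q$ switches partition into $2^{q-1}$ pairs, each contributing a distinct caterpillar. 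This gives $2^{q-1}$ pairwise non-isomorphic caterpillars with the same HDP and hence, by Theorem~\ref{thm:sub-hdp}, the same STP, and taking $q$ large produces arbitrarily large such families. The one genuinely technical point is the compatibility $(\gamma\circ\delta)^{\odot i} = \gamma^{\odot i} \circ \delta$, which is what allows the reversal of an internal factor to pass inside the iterated substitution supplied by Proposition~\ref{prop-composition-caterpillars}; everything else is bookkeeping with the recurrence and unique factorization.
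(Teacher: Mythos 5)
Your proposal is correct and follows essentially the same route as the paper: reversal-invariance of $\Htwo$ from the coarsening bijection, propagated through the factorization via Proposition~\ref{prop-composition-caterpillars}, with unique irreducible factorization supplying the count of $2^{q-1}$ caterpillars. The only difference is bookkeeping: you reverse a single internal factor directly, using the compatibility $(\gamma\circ\delta)^{\odot i}=\gamma^{\odot i}\circ\delta$ to pass the reversal inside the substitution, whereas the paper establishes the two-factor identities $\Htwo(\alpha\circ\beta)=\Htwo(\alpha^*\circ\beta)=\Htwo(\alpha\circ\beta^*)$ and leaves the remaining induction (which amounts to composing suffix reversals) implicit.
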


\begin{proof}
First, observe that $\Htwo(\alpha)=\Htwo(\alpha^*)$, because $\gamma\geq\alpha$ if and only if $\gamma^*\geq\alpha^*$, and $\gamma$ and $\gamma^*$ make the same contribution to the right-hand side of~\eqref{define-Htwo}.  Second, observe that
\[\Htwo(\alpha^*\circ\beta^*)=\Htwo(\alpha\circ\beta)=\Htwo(\alpha^*\circ\beta)=\Htwo(\alpha\circ\beta^*)\]
The first and third equalities follow from~\eqref{circ:reverse}, while the second follows from the first identity of Proposition~\ref{prop-composition-caterpillars}.
The desired corollary now follows by induction.
\end{proof}

\begin{example} \label{exa:same-hdp}
Consider the irreducible factorizations
\begin{align*}
\alpha &=(1,2)\circ(1,2) = (1,2)^{\odot1} \cdot (1,2)^{\odot2} = (1,2)\cdot(1,3,2) = (1,2,1,3,2),\\
\beta &= (2,1)\circ(1,2) = (1,2)^{\odot2} \cdot (1,2)^{\odot1} = (1,3,2)\cdot(1,2) = (1,3,2,1,2).
\end{align*}
Then $\Htwo(\alpha)=\Htwo(\beta)$ by Theorem~\ref{thm:equiv-class}, and the corresponding caterpillars
\[\Cat(1\odot\alpha\odot1)=\Cat(2,2,1,3,3), \qquad \Cat(1\odot\beta\odot1)=\Cat(2,3,2,1,3)\]
have the same half-generalized and subtree polynomials.  In fact, these are the two smallest such trees, shown above in Figure~\ref{fig:sameGDP}.
\end{example}

As a consequence, we prove the conjecture of Eisenstat and Gordon \cite[Conjecture~2.8]{EG}, which we now explain.
Let $p(x)$ be a polynomial whose coefficients are all 0's and 1's, with no two consecutive 0's; we may as well assume that the leading and trailing coefficients are 1.  Call such a polynomial \defterm{gap-free}.  Let $a<b$ be positive integers.
Expand the polynomials $(a+bx)p(x)$ and $(b+ax)p(x)$, read off the coefficient lists, and add 1 to the first and last terms to obtain lists $L_{p,1},L_{p,2}$, which we regard as the signatures of caterpillars $C_{p,1},C_{p,2}$.  For example, if $p(x)=1+x+x^3$ then
\begin{align*}
(a+bx)p(x) &= a+(a+b)x+bx^2+ax^3+bx^4 & L_{p,1}&=(a+1,a+b,b,a,b+1) & C_{p,1}&=\Cat(L_{p,1}),\\
(b+ax)p(x) &= b+(a+b)x+ax^2+bx^3+ax^4 & L_{p,2}&=(b+1,a+b,a,b,a+1) & C_{p,2}&=\Cat(L_{p,2}).
\end{align*}
For example, when $(a,b)=(1,2)$, this construction produces the caterpillars in Figure~\ref{fig:sameGDP}.

\begin{corollary}[Eisenstat--Gordon Conjecture] 
For every gap-free polynomial $p(x)$ and any positive integers $a,b$, the caterpillars $C_{p,1}$ and $C_{p,2}$ have the same subtree polynomial.
\end{corollary}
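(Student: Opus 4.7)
The plan is to show that the compositions $\alpha_1$ and $\alpha_2$ obtained by reading off the coefficient lists of $(a+bx)p(x)$ and $(b+ax)p(x)$ are switches of one another in the sense of Theorem~\ref{thm:equiv-class}. Since $L_{p,i}=1\odot\alpha_i\odot 1$ by construction, we have $C_{p,i}=\Cat(1\odot\alpha_i\odot 1)$, and Theorems~\ref{thm:equiv-class} and~\ref{thm:sub-hdp} will then combine to give $\stp{C_{p,1}}=\stp{C_{p,2}}$.

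Let $\rho=\rho(p)$ be the composition whose parts are the lengths of the maximal runs of consecutive $1$'s in $p(x)$; the gap-free hypothesis means precisely that any two consecutive such runs are separated by a single zero. Writing $p(x)=\sum_j x^{s_j}(1+x+\cdots+x^{c_j-1})$ with $\rho=(c_1,\dots,c_m)$ and $s_{j+1}=s_j+c_j+1$, the identity
\[(a+bx)(1+x+\cdots+x^{c-1})=a+(a+b)x+\cdots+(a+b)x^{c-1}+bx^c\]
shows that the $j$-th block of $(a+bx)p(x)$ has coefficient list $(a,b)^{\odot c_j}$. Because consecutive blocks occupy disjoint, adjacent ranges of exponents, their lists concatenate with no overlap, yielding $\alpha_1=\rho\circ(a,b)$, and swapping $a$ and $b$ gives $\alpha_2=\rho\circ(b,a)$.

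To apply Theorem~\ref{thm:equiv-class}, let $\rho=\delta_1\circ\cdots\circ\delta_r$ be the irreducible factorization from \cite[Theorem~3.6]{BTvW} (with $r=0$ when $\rho=(1)$, i.e., $p(x)=1$). When $\gcd(a,b)=1$, the length-$2$ composition $(a,b)$ is itself irreducible (any length-$2$ composition factors only as $(a/k,b/k)\circ(k)$ with $k\mid\gcd(a,b)$), and I would verify that $\delta_1\circ\cdots\circ\delta_r\circ(a,b)$ is a nontrivial irreducible factorization of $\alpha_1$: the conditions between $\delta_r$ and $(a,b)$ hold automatically because $(a,b)$ has length~$2$ and, since $a<b$ forces $b\geq 2$, not all its parts equal~$1$. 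Replacing $(a,b)$ by its reverse $(b,a)$ gives the corresponding irreducible factorization of $\alpha_2$, so the two are switches. When $k:=\gcd(a,b)>1$, the irreducible factorization of $(a,b)$ is $(a/k,b/k)\circ(k)$; appending this to $\delta_1\circ\cdots\circ\delta_r$ gives an irreducible factorization of~$\alpha_1$, and using~\eqref{circ:reverse} together with the calculation $(b/k,a/k)\circ(k)=(b,a)$, reversing only the single factor $(a/k,b/k)$ produces the corresponding factorization of~$\alpha_2$. Either way, $\alpha_1$ and $\alpha_2$ are switches and the conclusion follows.

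The main content of the argument is the identification $\alpha_1=\rho\circ(a,b)$, which is a short direct polynomial calculation. Everything else is bookkeeping with the irreducible-factorization machinery of~\cite{BTvW}; the only mildly subtle step is the case $\gcd(a,b)>1$, which introduces an extra factor~$(k)$ but does not disturb the switching structure.
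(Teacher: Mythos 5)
Your proof is correct and follows essentially the same route as the paper's: both identify the coefficient list of $(a+bx)p(x)$ as $\rho\circ(a,b)$ (the paper writes $\beta$ for your $\rho$, built from the positions of the zero coefficients) and that of $(b+ax)p(x)$ as $\rho\circ(a,b)^*$, and then invoke Theorem~\ref{thm:equiv-class}. The only difference is that you carefully verify that these two compositions are literally switches in the irreducible-factorization sense (including the $\gcd(a,b)>1$ case), whereas the paper omits this bookkeeping because the identity $\Htwo(\gamma\circ\delta)=\Htwo(\gamma\circ\delta^*)$ established inside the proof of Theorem~\ref{thm:equiv-class} holds for arbitrary, not necessarily irreducible, factors.
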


\begin{proof}
This statement is a special case of Theorem~\ref{thm:equiv-class}, for the following reasons.
Write $0=i_0<i_1<i_2<\ldots<i_d=\deg{p}$, where $\{i_1,\ldots,i_{d-1}\}$ are all the indices of the coefficients of $p$ equal to $0$. Let $\beta=(i_1-i_0,i_2-i_1,\ldots,i_d-i_{d-1})$ and $\alpha=(a,b)$.  Then
\[C_{p,1}=\Cat(1\odot (\beta \circ \alpha)\odot 1)\quad\text{and}\quad 
C_{p,2}=\Cat(1\odot (\beta\circ \alpha^*)\odot 1),\]
so indeed $C_{p,1}$ and $C_{p,2}$ have the same HDP and thus also the same STP.
\end{proof}

\section{Further remarks}

\subsection{Beyond caterpillars}
The near-concatenation operation $\odot$, and thus the construction of Theorem~\ref{thm:equiv-class} may be extended to trees that are not caterpillars.  Say that a \defterm{polarized tree} is a tree $T$ with two distinguished vertices, the \defterm{left end} and \defterm{right end}.  Given two polarized trees $T,T'$ with left ends $L,L'$ and right ends $R,R'$ respectively,
the \defterm{concatenation} is the polarized tree $T\cdot T'=(T+T')\cup\{RL'\}$ (where $+$ denotes disjoint union), with left end $u$ and right end $R'$. The \defterm{near-concatenation} is $T\odot T'=(T\cdot T')\odot RL'$, with the same left and right ends.

Let $\beta=(\beta_1,\ldots,\beta_m)$ be a composition and $T$ a polarized tree.  Define 
\[ \beta\circ T = T^{\odot \beta_1} \cdot T^{\odot \beta_2}\cdot \ldots \cdot T^{\odot\beta_m}\]
where
\[
T^{\odot i} = \underbrace{T\odot T\odot\cdots\odot T}_{\text{$i$ times}}.
\]
Examples of these constructions are given in Figure~\ref{fig:polarized}.

\begin{figure}[ht]
\begin{center}
\begin{tikzpicture}
\newcommand{\dx}{0.7}
\newcommand{\dy}{0.5}
\draw (0,0)--(3*\dx,0)--(3*\dx,\dy) (\dx-0.25,\dy)--(\dx,0)--(\dx+.25,\dy) (2*\dx-.25,-2*\dy)--(2*\dx,-\dy)--(2*\dx+.25,-2*\dy) (2*\dx,-\dy)--(2*\dx,0);
\foreach \x/\y in {\dx/0, 2*\dx/0, \dx-.25/\dy, \dx+.25/\dy, 2*\dx/-\dy, 2*\dx-.25/-2*\dy, 2*\dx+.25/-2*\dy, 3*\dx/\dy} \draw[thick,fill=black] (\x,\y) circle (.075);
\draw[very thick, fill=white] (0,0) circle (.2); \node at (0,0) {\footnotesize\sf L};
\draw[very thick, fill=white] (3*\dx,0) circle (.2); \node at (3*\dx,0) {\footnotesize\sf R};
\node at (1.5*\dx,-4*\dy) {$T$};

\begin{scope}[shift={(9*\dx,0)}]
\draw (0,0)--(3*\dx,0)--(3*\dx,\dy) (\dx-0.25,\dy)--(\dx,0)--(\dx+.25,\dy) (2*\dx-.25,-2*\dy)--(2*\dx,-\dy)--(2*\dx+.25,-2*\dy) (2*\dx,-\dy)--(2*\dx,0);
\foreach \x/\y in {\dx/0, 2*\dx/0, \dx-.25/\dy, \dx+.25/\dy, 2*\dx/-\dy, 2*\dx-.25/-2*\dy, 2*\dx+.25/-2*\dy, 3*\dx/\dy} \draw[thick,fill=black] (\x,\y) circle (.075);
\draw[very thick, fill=white] (0,0) circle (.2); \node at (0,0) {\footnotesize\sf L};
\end{scope}
\begin{scope}[shift={(12*\dx,0)}]
	\draw (3*\dx,0)--(3*\dx,-\dy) (0,-\dy)--(0,0);
	\draw[very thick, fill=black] (0,-\dy) circle (.075);
	\draw[very thick, fill=black] (3*\dx,-\dy) circle (.075);
	\draw (0,0)--(3*\dx,0)--(3*\dx,\dy) (\dx-0.25,\dy)--(\dx,0)--(\dx+.25,\dy) (2*\dx-.25,-2*\dy)--(2*\dx,-\dy)--(2*\dx+.25,-2*\dy) (2*\dx,-\dy)--(2*\dx,0);
	\foreach \x/\y in {\dx/0, 2*\dx/0, \dx-.25/\dy, \dx+.25/\dy, 2*\dx/-\dy, 2*\dx-.25/-2*\dy, 2*\dx+.25/-2*\dy, 3*\dx/\dy} \draw[thick,fill=black] (\x,\y) circle (.075);
	\draw[very thick, fill=white] (0,0) circle (.15);
	\node at (1.5*\dx,-4*\dy) {$T^{\odot3}$};
\end{scope}
\begin{scope}[shift={(15*\dx,0)}]
	\draw (0,0)--(3*\dx,0)--(3*\dx,\dy) (\dx-0.25,\dy)--(\dx,0)--(\dx+.25,\dy) (2*\dx-.25,-2*\dy)--(2*\dx,-\dy)--(2*\dx+.25,-2*\dy) (2*\dx,-\dy)--(2*\dx,0);
	\foreach \x/\y in {\dx/0, 2*\dx/0, \dx-.25/\dy, \dx+.25/\dy, 2*\dx/-\dy, 2*\dx-.25/-2*\dy, 2*\dx+.25/-2*\dy, 3*\dx/\dy} \draw[thick,fill=black] (\x,\y) circle (.075);
	\draw[very thick, fill=white] (0,0) circle (.15);
	\draw[very thick, fill=white] (3*\dx,0) circle (.2); \node at (3*\dx,0) {\footnotesize\sf R};
\end{scope}

\begin{scope}[shift={(0,-8*\dy)}]
	\node at (10*\dx,-4*\dy) {$(3,1,2)\circ T$};
	\draw (0,0)--(3*\dx,0)--(3*\dx,\dy) (\dx-0.25,\dy)--(\dx,0)--(\dx+.25,\dy) (2*\dx-.25,-2*\dy)--(2*\dx,-\dy)--(2*\dx+.25,-2*\dy) (2*\dx,-\dy)--(2*\dx,0);
	\foreach \x/\y in {\dx/0, 2*\dx/0, \dx-.25/\dy, \dx+.25/\dy, 2*\dx/-\dy, 2*\dx-.25/-2*\dy, 2*\dx+.25/-2*\dy, 3*\dx/\dy} \draw[thick,fill=black] (\x,\y) circle (.075);
	\draw[very thick, fill=white] (0,0) circle (.2); \node at (0,0) {\footnotesize\sf L};
	\begin{scope}[shift={(3*\dx,0)}]
		\draw (3*\dx,0)--(3*\dx,-\dy) (0,-\dy)--(0,0);
		\draw[very thick, fill=black] (0,-\dy) circle (.075);
		\draw[very thick, fill=black] (3*\dx,-\dy) circle (.075);
		\draw (0,0)--(3*\dx,0)--(3*\dx,\dy) (\dx-0.25,\dy)--(\dx,0)--(\dx+.25,\dy) (2*\dx-.25,-2*\dy)--(2*\dx,-\dy)--(2*\dx+.25,-2*\dy) (2*\dx,-\dy)--(2*\dx,0);
		\foreach \x/\y in {\dx/0, 2*\dx/0, \dx-.25/\dy, \dx+.25/\dy, 2*\dx/-\dy, 2*\dx-.25/-2*\dy, 2*\dx+.25/-2*\dy, 3*\dx/\dy} \draw[thick,fill=black] (\x,\y) circle (.075);
		\draw[very thick, fill=white] (0,0) circle (.15);
	\end{scope}
	\begin{scope}[shift={(6*\dx,0)}]
		\draw (0,0)--(3*\dx,0)--(3*\dx,\dy) (\dx-0.25,\dy)--(\dx,0)--(\dx+.25,\dy) (2*\dx-.25,-2*\dy)--(2*\dx,-\dy)--(2*\dx+.25,-2*\dy) (2*\dx,-\dy)--(2*\dx,0);
		\foreach \x/\y in {\dx/0, 2*\dx/0, \dx-.25/\dy, \dx+.25/\dy, 2*\dx/-\dy, 2*\dx-.25/-2*\dy, 2*\dx+.25/-2*\dy, 3*\dx/\dy} \draw[thick,fill=black] (\x,\y) circle (.075);
		\draw[very thick, fill=white] (0,0) circle (.15);
		\draw (3*\dx,0)--(4*\dx,0); 
		\draw[very thick, fill=white] (3*\dx,0) circle (.15);
	\end{scope}
	\begin{scope}[shift={(10*\dx,0)}]
		\draw (0,0)--(3*\dx,0)--(3*\dx,\dy) (\dx-0.25,\dy)--(\dx,0)--(\dx+.25,\dy) (2*\dx-.25,-2*\dy)--(2*\dx,-\dy)--(2*\dx+.25,-2*\dy) (2*\dx,-\dy)--(2*\dx,0);
		\foreach \x/\y in {\dx/0, 2*\dx/0, \dx-.25/\dy, \dx+.25/\dy, 2*\dx/-\dy, 2*\dx-.25/-2*\dy, 2*\dx+.25/-2*\dy, 3*\dx/\dy} \draw[thick,fill=black] (\x,\y) circle (.075);
		\draw[very thick, fill=white] (0,0) circle (.15);
		\draw (3*\dx,0)--(4*\dx,0); 
		\draw[very thick, fill=white] (3*\dx,0) circle (.15);
	\end{scope}
	\begin{scope}[shift={(14*\dx,0)}]
		\draw (0,0)--(3*\dx,0)--(3*\dx,\dy) (\dx-0.25,\dy)--(\dx,0)--(\dx+.25,\dy) (2*\dx-.25,-2*\dy)--(2*\dx,-\dy)--(2*\dx+.25,-2*\dy) (2*\dx,-\dy)--(2*\dx,0);
		\foreach \x/\y in {\dx/0, 2*\dx/0, \dx-.25/\dy, \dx+.25/\dy, 2*\dx/-\dy, 2*\dx-.25/-2*\dy, 2*\dx+.25/-2*\dy, 3*\dx/\dy} \draw[thick,fill=black] (\x,\y) circle (.075);
		\draw[very thick, fill=white] (0,0) circle (.15);
	\end{scope}
	\begin{scope}[shift={(17*\dx,0)}]
		\draw (0,-\dy)--(0,0);
		\draw[very thick, fill=black] (0,-\dy) circle (.075);
		\draw (0,0)--(3*\dx,0)--(3*\dx,\dy) (\dx-0.25,\dy)--(\dx,0)--(\dx+.25,\dy) (2*\dx-.25,-2*\dy)--(2*\dx,-\dy)--(2*\dx+.25,-2*\dy) (2*\dx,-\dy)--(2*\dx,0);
		\foreach \x/\y in {\dx/0, 2*\dx/0, \dx-.25/\dy, \dx+.25/\dy, 2*\dx/-\dy, 2*\dx-.25/-2*\dy, 2*\dx+.25/-2*\dy, 3*\dx/\dy} \draw[thick,fill=black] (\x,\y) circle (.075);
		\draw[very thick, fill=white] (0,0) circle (.15);
		\draw[very thick, fill=white] (3*\dx,0) circle (.2); \node at (3*\dx,0) {\footnotesize\sf R};
	\end{scope}
\end{scope}
\end{tikzpicture}
\end{center}
\caption{Operations on polarized trees.}
\label{fig:polarized}
\end{figure}
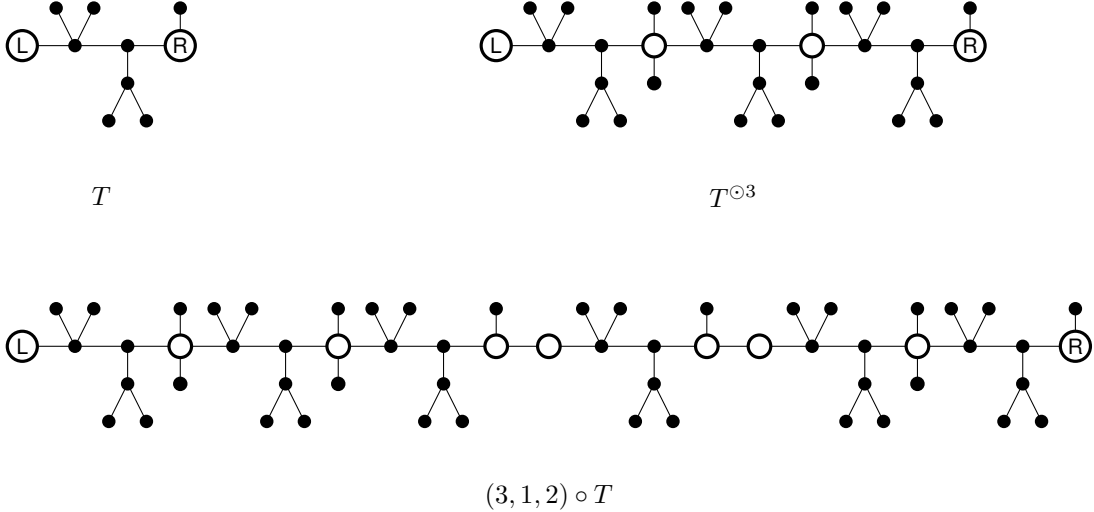

By extending the arguments of Section~\ref{sec:same-HDP} from caterpillars to polarized trees, we can generalize
Proposition~\ref{prop-composition-caterpillars} and Theorem~\ref{thm:equiv-class} to arbitrary trees, as follows.
\begin{theorem}
\label{thm:h-class-general}
Let $T$ be a tree, and $\alpha$ a composition. Then 
\[\uhdp{1\odot (\alpha\circ T)\odot 1} = \Htwo(\alpha)|_{x_i=\uhdp{1\odot T^{\odot i}\odot 1}}.\]
Moreover, if $\alpha$ and $\beta$ are related by switching, then
\[\hdp{\Cat(1\odot (\alpha\circ T)\odot 1)} = \hdp{\Cat(1\odot (\beta\circ T)\odot 1)}.\]
\end{theorem}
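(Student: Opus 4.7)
The proof proceeds in close parallel to the caterpillar arguments of Section~\ref{sec:same-HDP}, the main novelty being that the factors in $\alpha\circ T$ are polarized trees rather than compositions. First, I would verify that the polarized-tree analogues of identities~\eqref{circ:1} and~\eqref{circ:2} hold:
\[(\alpha\cdot\gamma)\circ T=(\alpha\circ T)\cdot(\gamma\circ T)\qquad\text{and}\qquad(\alpha\odot\gamma)\circ T=(\alpha\circ T)\odot(\gamma\circ T),\]
both of which are immediate from the definitions of $\cdot$ and $\odot$ on polarized trees (once one checks that these operations are associative up to isomorphism preserving the polarization). Next, I would observe that Proposition~\ref{prop:uhdp-recurrence} was stated for arbitrary trees, not just caterpillars; applied to polarized trees $U,V$ with the edge $e$ being the edge added in forming $U\cdot V$, it yields
\[\uhdp{U\cdot V}=\tfrac{y}{y+z}\bigl(\uhdp{\tilde U}+\uhdp{\tilde V}\bigr)+\tfrac{z}{y+z}\uhdp{U\odot V},\]
where $\tilde U$, $\tilde V$ denote $U$ and $V$ with an extra leaf attached at the inner endpoint. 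Setting $F(T):=\uhdp{1\odot T\odot 1}$, so that $\tilde{}$ corresponds to padding with a $1$ on one side, this translates into the polarized-tree recurrence
\[F(T\cdot T')=\tfrac{y}{y+z}\bigl(F(T)+F(T')\bigr)+\tfrac{z}{y+z}F(T\odot T'),\]
which is precisely the shape of~\eqref{eq:recurrence_h}.

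For the first identity of the theorem, I would induct on $\ell(\alpha)$. The base case $\alpha=(a)$ is immediate, since $\alpha\circ T=T^{\odot a}$ and $\Htwo((a))=x_a$, so both sides equal $F(T^{\odot a})$. For the inductive step, write $\alpha=\alpha'\cdot(a_k)$ and apply, in sequence, the concatenation-compatibility identity above, the recurrence for $F$, the induction hypothesis (applied to $\alpha'$, $(a_k)$, and $\alpha'\odot(a_k)$, each of which is strictly shorter than $\alpha$ or reduces to the base case), and finally Proposition~\ref{prop:Htwo-recurrence} to collapse the three $\Htwo$ terms into $\Htwo(\alpha)$. This is a word-for-word translation of the proof of Proposition~\ref{prop-composition-caterpillars}.

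For the switching statement, I would combine the first identity with the observation from the proof of Theorem~\ref{thm:equiv-class} that $\Htwo(\alpha)=\Htwo(\beta)$ whenever $\alpha$ and $\beta$ are related by switching; this immediately yields $F(\alpha\circ T)=F(\beta\circ T)$, i.e., equality of the $\uhdp{}$'s. To upgrade to $\hdp{}$, I would verify that $1\odot(\alpha\circ T)\odot 1$ and $1\odot(\beta\circ T)\odot 1$ have the same total number of vertices and the same number of leaves, so that the normalization $\hdp{T}=\uhdp{T}+\ell(T)\,y$ preserves the equality. This leaf count follows because switching only reverses certain irreducible factors: the multiset of copies of $T$ used in building $\alpha\circ T$ versus $\beta\circ T$ is identical, and the operations $\cdot$ and $\odot$ affect only the boundary structure between consecutive factors.

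The main obstacle is the bookkeeping in the polarized-tree setup: one must verify that $\cdot$ and $\odot$ are well-defined and associative on polarized trees up to isomorphism, that the inner-endpoint extra leaves in Proposition~\ref{prop:uhdp-recurrence} really do correspond to the substitutions that convert $T_1\cdot T_2$ into $1\odot(T_1\cdot T_2)\odot 1$ in an $F$-compatible way, and that the leaf counts in the final step are indeed invariant under switching. Once these structural points are pinned down, the proof is a formal consequence of Propositions~\ref{prop:uhdp-recurrence},~\ref{prop:Htwo-recurrence}, and~\ref{prop-composition-caterpillars} together with the reversal-invariance of $\Htwo$.
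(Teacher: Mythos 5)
Your proposal is correct and is precisely the argument the paper has in mind: the paper gives no proof of Theorem~\ref{thm:h-class-general} beyond the remark that the arguments of Section~\ref{sec:same-HDP} extend to polarized trees, and your translation of Proposition~\ref{prop:uhdp-recurrence} into the recurrence for $F(T):=\uhdp{1\odot T\odot 1}$, followed by the same induction as in Proposition~\ref{prop-composition-caterpillars} and the substitution $x_i\mapsto F(T^{\odot i})$ into $\Htwo(\alpha)=\Htwo(\beta)$, is exactly that extension. The only inessential detour is your final leaf-count check: since $\uhdp{}$ already determines $\hdp{}$ (the $z^0$-coefficient gives the number of non-leaf vertices and the top $z$-power gives $n$), equality of the $\uhdp{}$'s yields equality of the $\hdp{}$'s automatically.
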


Not all pairs of trees with the same HDP arise from the construction of Theorem~\ref{thm:h-class-general}.  The unique smallest example, obtained by computer experimentation, is shown in Figure \ref{fig:counter}.

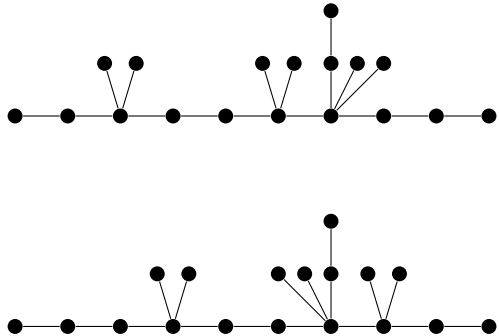
\begin{figure}
\begin{center}
\begin{tikzpicture}[scale=.7,every node/.style={circle,fill=black,inner sep = 2pt}]
\node (a0) at (5,0) {};
\node (a8) at (6,0) {};
\node (a16) at (5.3,1) {};
\node (a17) at (4.7,1) {};
\node (a1) at (4,0) {};
\node (a2) at (3,0) {};
\node (a3) at (2,0) {};
\node (a4) at (1,0) {};
\node (a5) at (0,0) {};
\node (a6) at (1.7,1) {};
\node (a7) at (2.3,1) {};
\node (a9) at (7,0) {};
\node (a10) at (8,0) {};
\node (a11) at (9,0) {};
\node (a12) at (6,1) {};
\node (a13) at (6,2) {};
\node (a14) at (6.5,1) {};
\node (a15) at (7,1) {};
\draw (a0)--(a1)  (a0)--(a8)  (a0)--(a16)  (a0)--(a17)  (a1)--(a2)  (a2)--(a3)  (a3)--(a4)--(a5)  (a6)--(a3)--(a7)  (a13)--(a12)--(a8)--(a9)--(a10)--(a11)  (a14)--(a8)--(a15);
\begin{scope}[shift={(0,-4)}]
	\node (a0) at (5,0) {};
	\node (a8) at (6,0) {};
	\node (a16) at (5,1) {};
	\node (a17) at (5.5,1) {};
	\node (a1) at (4,0) {};
	\node (a2) at (3,0) {};
	\node (a3) at (2,0) {};
	\node (a4) at (1,0) {};
	\node (a5) at (0,0) {};
	\node (a6) at (2.7,1) {};
	\node (a7) at (3.3,1) {};
	\node (a9) at (7,0) {};
	\node (a10) at (8,0) {};
	\node (a11) at (9,0) {};
	\node (a12) at (6.7,1) {};
	\node (a13) at (7.3,1) {};
	\node (a14) at (6,1) {};
	\node (a15) at (6,2) {};
	\draw (a0)--(a8)  (a0)--(a1)--(a2)--(a3)--(a4)--(a5)  (a7)--(a2)--(a6)  (a14)--(a8)--(a9)  (a17)--(a8)--(a16)    (a12)--(a9)--(a10)--(a11)  (a9)--(a13)  (a14)--(a15);
\end{scope}
\end{tikzpicture}
\end{center}
\caption{Trees with the same HDP that cannot be obtained as $\alpha\circ T$}
\label{fig:counter}
\end{figure}

\subsection{Comparing the GDP and the HDP} \label{gdp-and-hdp}

For trees $T$ and $T'$, it is clear that $\gdp{T}=\gdp{T'}$ implies $\hdp{T}=\hdp{T'}$.  In fact, the reverse implication holds for all trees with $n\leq 8$, and both invariants provide strong distinguishing power.  For $n\leq 10$, the equivalence classes induced by $\gdpop$ and $\hdpop$ are all singletons; that is, both the GDP and the HDP are complete invariants.  For $11\leq n\leq 18$, the non-singleton equivalence classes are all of size two, and are enumerated as follows:
\[\begin{array}{|c|cccccccc|} \hline
n & 11&12&13&14&15&16&17&18\\ \hline
\text{Number of size-two equivalence classes} & 1&1&1&5&1&7&19&15\\ \hline
\end{array}\]
Michael Tang [personal communication to the authors] discovered two 19-vertex trees $T_1,T_2$ with different GDPs but the same HDP, shown in Figure~\ref{fig:tang}.  The equality of the HDPs may be checked computationally.  To observe without machine computation that the GDPs differ, one may observe that the five circled vertices of $T_1$ form a coclique incident to a total of 15 edges; no such coclique occurs in $T_2$.  Thus the coefficient of $x^5y^{15}$ is nonzero in $\hdp{T_1}$ (in fact it is 1), while it is zero in $\hdp{T_2}$.

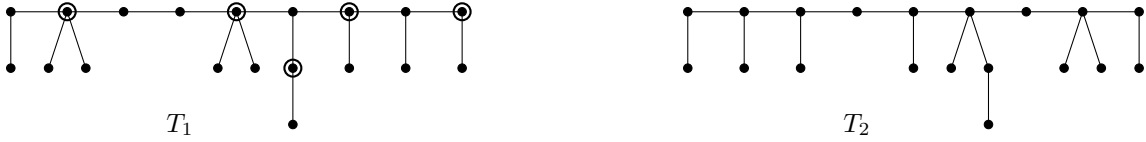
\begin{figure}[th]
\begin{center}
\begin{tikzpicture}[scale=0.75]
\coordinate (t1v00) at (5,0);
\coordinate (t1v01) at (6,0);
\coordinate (t1v02) at (7,0);
\coordinate (t1v03) at (8,0);
\coordinate (t1v04) at (9,0);
\coordinate (t1v05) at (6,-1);
\coordinate (t1v06) at (4,0);
\coordinate (t1v07) at (3,0);
\coordinate (t1v08) at (2,0);
\coordinate (t1v09) at (1,0);
\coordinate (t1v10) at (5.33,-1);
\coordinate (t1v11) at (4.67,-1);
\coordinate (t1v12) at (7,-1);
\coordinate (t1v13) at (8,-1);
\coordinate (t1v14) at (9,-1);
\coordinate (t1v15) at (6,-2);
\coordinate (t1v16) at (2.33,-1);
\coordinate (t1v17) at (1.67,-1);
\coordinate (t1v18) at (1,-1);
\foreach \coo in {t1v00,t1v01,t1v02,t1v03,t1v04,t1v05,t1v06,t1v07,t1v08,t1v09,t1v10,t1v11,t1v12,t1v13,t1v14,t1v15,t1v16,t1v17,t1v18}
	\draw[fill=black] (\coo) circle (.075);
\foreach \coo in {t1v00,t1v02,t1v04,t1v05,t1v08} \draw[thick] (\coo) circle (.15);
\draw (t1v18)--(t1v09)--(t1v08)--(t1v07)--(t1v06)--(t1v00)--(t1v01)--(t1v02)--(t1v03)--(t1v04)--(t1v14) (t1v16)--(t1v08)--(t1v17) (t1v10)--(t1v00)--(t1v11) (t1v01)--(t1v05)--(t1v15) (t1v03)--(t1v13) (t1v02)--(t1v12);
\node at (4,-2) {$T_1$};
\begin{scope}[shift={(12,0)}]
\coordinate (t2v00) at (5,0);
\coordinate (t2v01) at (6,0);
\coordinate (t2v02) at (7,0);
\coordinate (t2v03) at (8,0);
\coordinate (t2v04) at (9,0);
\coordinate (t2v05) at (6.33,-1);
\coordinate (t2v06) at (4,0);
\coordinate (t2v07) at (3,0);
\coordinate (t2v08) at (2,0);
\coordinate (t2v09) at (1,0);
\coordinate (t2v10) at (5,-1);
\coordinate (t2v11) at (5.67,-1);
\coordinate (t2v12) at (7.67,-1);
\coordinate (t2v13) at (8.33,-1);
\coordinate (t2v14) at (9,-1);
\coordinate (t2v15) at (6.33,-2);
\coordinate (t2v16) at (3,-1);
\coordinate (t2v17) at (2,-1);
\coordinate (t2v18) at (1,-1);
\foreach \coo in {t2v00,t2v01,t2v02,t2v03,t2v04,t2v05,t2v06,t2v07,t2v08,t2v09,t2v10,t2v11,t2v12,t2v13,t2v14,t2v15,t2v16,t2v17,t2v18}
	\draw[fill=black] (\coo) circle (.075);
\draw (t2v14)--(t2v04)--(t2v03)--(t2v02)--(t2v01)--(t2v00)--(t2v06)--(t2v07)--(t2v08)--(t2v09)--(t2v18) (t2v07)--(t2v16) (t2v08)--(t2v17) (t2v00)--(t2v10) (t2v12)--(t2v03)--(t2v13) (t2v11)--(t2v01)--(t2v05)--(t2v15);
\node at (4,-2) {$T_2$};
\end{scope}
\end{tikzpicture}
\end{center}
\caption{Two trees $T_1,T_2$ with the same HDP but different GDPs, found by Michael Tang.}\label{fig:tang}
\end{figure}

\subsection{The \texorpdfstring{$\mathcal{L}$}{L}-polynomial}
The $\mathcal{L}$-polynomial of a composition $\alpha$
is defined as 
\[\mathcal{L}(\alpha)=\sum_{\gamma\geq \alpha}\prod_{i}x_{\gamma_i}.\]
It is a specialization of the $U$-polynomial and hence also a specialization of the chromatic symmetric function \cite{APZ}. It is clear that $\Htwo(\alpha)$ can be computed from $\mathcal{L}(\alpha)$.  In \cite{BTvW,APZ}, it was shown that two caterpillars have the same $\mathcal{L}$-polynomial if and only if their signatures are related by switching.
\begin{question}
If $\Htwo(\alpha)=\Htwo(\beta)$, are $\alpha$ and $\beta$ necessarily related by switching?
\end{question}

We have verified by explicit computation that the answer to the latter question is affirmative for caterpillars up to 18 vertices.

\subsection{A closed formula for the gdp of a caterpillar}
The generalized degree polynomial of a caterpillar can be expressed compactly by grouping the sets $A\subseteq V(T)$ by $\spine(A) := A\cap\spine(T)$, since the contribution of each leaf to $\gdp{T}$ depends only on whether its neighbor belongs to $A$.  Thus we obtain
\begin{align}
\gdp{\Cat(\alpha)}
&= \sum_{U\subseteq[k]} x^{|U|} y^{\tilde d(U)} z^{\tilde e(U)} (xz+y)^{\sum_{u\in U}(a_u-1)} (xy+1)^{\sum_{u\in[k]\sm U}(a_u-1)}  \label{gdp-cat}\\
&= (xy+1)^{n-k}\sum_{U\subseteq[k]} x^{|U|} y^{d(U)} z^{e(U)} \left(\frac{xz+y}{xy+1}\right)^{\sum_{u\in U}(a_u-1)}\notag
\end{align}
where $\tilde d(U)$ and $\tilde e(U)$ are the numbers of boundary edges and internal edges in the spine, i.e.,
\begin{align*}
\tilde d(U) &= |\{i\in[k-1]\colon i\in U \text{ xor } i+1 \in U\}|,\\ 
\tilde e(U) &= |\{i\in[k-1]\colon i\in U \text{ and } i+1 \in U\}|.
\end{align*}
There is a similar formula for the half-generalized degree polynomial of a caterpillar, since $A\subseteq V(T)$ induces a subtree if and only if either (i) $A=\{v\}$ for some $v\in L(T)$, or (ii) $\spine(A)$ is a nontrivial path and every non-spine vertex in $A$ has a neighbor in $\spine(A)$.  Thus we obtain
\begin{equation} \label{hdp-cat}
\hdp{\Cat(\alpha)} = (n-k)y + \sum_{1\leq i\leq j\leq k} y^{i>1} y^{j<k} z^{j-i} (z+y)^{\sum_{u=i}^j(a_u-1
)}
\end{equation}
where, e.g., the symbol $y^{i>1}$ is interpreted as ``$y$ if $i>1$, else 1.''  Formulas~\eqref{gdp-cat} and~\eqref{hdp-cat} are useful for explicit computation; for instance, the number of terms in~\eqref{gdp-cat} is exponential in the size of the spine rather than the size of the tree, a significant savings.  On the other hand, we do not know how to obtain usable recurrences for these expressions.

\subsection{Combining the HDP and the STP}

We propose another tree invariant for further study.
Define the \defterm{souped-up subtree polynomial} as the following common generalization of the HDP and STP:
\[\soup{T}=\soup{T}(x,\,y,\,z)=\sum_{S\in\SSS(T)} x^{e(S)} y^{d(S)} z^{\ell(S)}\]
so that $\hdp{T}=\soup{T}(z,\,y,\,1)$ and $\stp{T}=\soup{T}(q,\,1,\,r)$.
For instance, the souped-up subtree polynomials of the trees $P_4$ and $S_4$ are
\begin{align*}
\soup{P_4} &= x^3z^2 + 2x^2yz^2 + xy^2z + 2xyz + 2y^2 + 2y,\\
\soup{S_4} &= x^3z^3 + 3x^2yz^2 + 3xy^2z + y^3 + 3y
\end{align*}
(we omit the details of the computation).

\begin{question}
What can be said about the ability of $\soup{T}$ to distinguish trees?
\end{question}

The souped-up subtree polynomial contains strictly more information than either the half-generalized degree polynomial or the subtree polynomial, because it distinguishes the two 11-vertex caterpillars in Figure~\ref{fig:sameGDP}.  Indeed, $T_1$ has exactly one subgraph with three edges, two leaves, and one external edge (induced by the four leftmost vertices), but $T_2$ has none.  Therefore, the coefficients of $x^3yz^2$ differ in $\soup{T_1}$ and $\soup{T_2}$.  In particular, $\soup{T}$ cannot be recovered from the generalized degree polynomial.

We have not found a pair of non-isomorphic trees with the same souped-up subtree polynomial, and we have checked by explicit computation that $\soup{T}$ is a complete invariant for trees with 18 or fewer vertices.  We have also checked computationally that neither $\soup{T}$ nor $\csf{T}$ can be obtained from the other linearly for $n=8$.

\bibliographystyle{amsalpha}
\bibliography{biblio}
\end{document}